\newcommand{\mb}{\mathbb}
\newcommand{\mc}{\mathcal}
\newcommand{\imp}{\rightarrow}
\newcommand{\Imp}{\Rightarrow}
\newcommand{\sub}{\subseteq}
\newcommand{\D}{\Diamond}
\newcommand{\B}{\Box}
\title{Residuated Basic Logic II.\\{\large Interpolation, Decidability and Embedding}}
\author{Minghui Ma\inst{1} \and Zhe Lin \inst{2}}
\institute{Institute for Logic and Intelligence, Southwest University,\\
Beibei District, Chongqing, 400715, China.\\
\email{mmh.thu@gmail.com}
\and Corresponding Author.\\
Institute of Logic and Cognition, Sun Yat-sen University\\
No. 135, Xingang Xi Road, Guangzhou, China\\
Faculty of Mathematics and Computer Science, Adam Mickiewicz University,
Umultowska 87, 61-614 Pozna\'{n}, Poland\\
\email{pennyshaq@gmail.com}}
\begin{document}

\maketitle

\begin{abstract}
We prove that the sequent calculus $\mathsf{L_{RBL}}$ for residuated basic logic $\mathsf{RBL}$ has strong finite model property, and that intuitionistic logic can be embedded into basic propositional logic $\mathsf{BPL}$.  Thus $\mathsf{RBL}$ is decidable. Moreover, it follows that the class of residuated basic algebras has the finite embeddability property, and that $\mathsf{BPL}$ is PSPACE-complete, and that intuitionistic logic can be embedded into the modal logic $\mathsf{K4}$.
\end{abstract}

\section{Introduction}
The first part of this paper (\cite{ML14}) developed the residuated basic logic $\mathsf{RBL}$ which is the logic of residuated basic algebras (bounded distributive lattice order residuated groupoid with weakening and restricted contraction), and we proved that $\mathsf{RBL}$ is a conservative extension of Visser's basic propositional logic $\mathsf{BPL}$. We presented the algebraic system $\mathsf{S_{RBL}}$, and its sequent calculus formalization $\mathsf{L_{RBL}}$ which has cut elimination and subformula property.

This part II aims to show that the sequent calculus $\mathsf{L_{RBL}}$ has strong finite model property (SFMP) and intuitionistic logic $\mathsf{Int}$ can be embedded into $\mathsf{BPL}$. The technique for proving SFMP is to construct finite syntatic model in which an interpolation lemma for $\mathsf{L_{RBL}}$ is used. Consequently, it follows that the class of residuated basic algebras has the finite embeddability property (FEP), that $\mathsf{BPL}$ is PSPACE-complete, and that intuitionistic logic can be embedded into the modal logic $\mathsf{K4}$. The section 2 is devoted to recall some basic notations and remind some results for $\mathsf{RBL}$ in \cite{ML14}. In section 3 we sketch Buszkowski's proof for that the lattice order distributive residuated groupoid has FEP since we will follow the same strategy to prove the FEP of the class of residuated basic algebras. In section 5, we show that there exists a translation, a polynomial reduction from $\mathsf{Int}$ to $\mathsf{RBL}$, via which $\mathsf{Int}$ is embedded into $\mathsf{BPL}$. The structural rule free sequent calculus $\mathsf{G4ip}$ for $\mathsf{Int}$ (\cite{Dyc92,TS00}) is essentially used in our proof.

\section{Residuated Basic Logic}

We recall some definitions and results in the part I of this paper (\cite{ML14}). A residuated groupoid ($\mathbf{RG}$) is an algebra of the form $(\mathsf{G},\cdot,\leftarrow, \rightarrow,\leq)$, where $(\mathsf{G},\leq)$ is a poset and $\cdot$, $\leftarrow$ and $\rightarrow$ are a binary operations on $G$ satisfying the following conditions for all $a,b,c\in G$: 
\begin{center}
$a\cdot b\leq c$ iff $b\leq a\rightarrow c$ iff $a\leq c\leftarrow b$.
\end{center}
A {\em residuated basic algebra} ($\mathbf{RBA}$)  is an algebra $\mathbf{A}=(\mathsf{A},\wedge,\vee,\top,\bot,\rightarrow,\leftarrow,\cdot)$ such that $(\mathsf{A},\wedge,\vee,\top,\bot)$ is a bounded distributive lattice and $(\mathsf{A},\rightarrow, \leftarrow, \cdot,\leq)$ is a residuated groupoid satisfying the following axioms: for all $a,b,c\in\mathsf{A}$,
\begin{align*}
(\mathrm{w}_1)~& a\cdot\top \leq a;~~
(\mathrm{w}_2)~\top\cdot a\leq a;~~
(\mathrm{c_r})~a\cdot b\leq (a\cdot b)\cdot b
\end{align*}
where $\leq$ is the lattice order. Let $\mathbb{RBA}$ be the class of all residuated basic algebras.

Let us recall some notions of residuated basic logic $\mathsf{RBL}$. The language $\mc{L}_{\mathrm{RBL}}$ for $\mathsf{RBL}$ is the extension of $\mathsf{BPL}$ by adding binary operators $\cdot$ and $\leftarrow$. The set of all $\mc{L}_{\mathrm{RBL}}$-formulae is defined recursively as follows:
\begin{center}
$A::=p\mid\bot\mid\top\mid A\wedge A\mid A\vee A\mid A\cdot A\mid A\imp A\mid A\leftarrow A$
\end{center}
where $p\in\mathsf{Prop}$. The residuated basic logic $\mathsf{RBL}$ is the set of all $\mc{L}_{\mathrm{RBL}}$-formulae which are valid in all residuated basic algebras.

The algebraic system $\mathsf{S_{RBL}}$ for residuated basic algebras consists of the following axioms and rules:
\begin{displaymath}
(\mathrm{Id})~A \Rightarrow A\quad (\bot)~\bot\Rightarrow A\quad (\top)~A \Rightarrow \top \quad(\mathrm{Cut})~ \frac{A\Rightarrow B \quad B\Rightarrow C}{A\Rightarrow C}
\end{displaymath}
\begin{displaymath}
(\mathrm{D})~ A\wedge(B\vee C)\Rightarrow (A\wedge B)\vee (A\wedge C)
\end{displaymath}
\begin{displaymath}
(\mathrm{W}_l)~ A\cdot \top \Rightarrow A\quad (\mathrm{W}_r)~ \top\cdot A\Rightarrow A
\quad
\mathrm{(RC)}~ A\cdot B\Rightarrow (A\cdot B)\cdot B
\end{displaymath}
\begin{displaymath}
(\mathrm{R1})~ \frac{A\cdot B \Rightarrow C}{B\Rightarrow A\rightarrow C}\quad
(\mathrm{R2})~\frac{B \Rightarrow A\rightarrow C}{A\cdot B\Rightarrow C}
\end{displaymath}
\begin{displaymath}
(\mathrm{R3})~ \frac{A\cdot B \Rightarrow C}{A\Rightarrow C\leftarrow B}\quad
(\mathrm{R4}) ~\frac{A \Rightarrow C\leftarrow B}{A\cdot B\Rightarrow C}
\end{displaymath}
\begin{displaymath}
\mathrm{(\wedge L)}~\frac{A_i\Rightarrow B}{A_1\wedge A_2 \Rightarrow B},~{i\in\{1,2\}}\quad
\mathrm{(\wedge R)}~\frac{C\Rightarrow A\quad C \Rightarrow B}{C \Rightarrow A\wedge B}
\end{displaymath}
  \begin{displaymath}
(\mathrm{\vee L})~\frac{A\Rightarrow C\quad B\Rightarrow C}{A\vee B \Rightarrow C}\quad
(\mathrm{\vee R)}~ \frac{C \Rightarrow A_i}{C \Rightarrow A_1\vee A_2},~{i\in\{1,2\}}
   \end{displaymath}

The $\mc{L}_{\mathrm{RBL}}$-formula structures are defined as follows: ($\romannumeral1$) every $\mc{L}_{\mathrm{RBL}}$-formula is a formula structure; ($\romannumeral2$) if $\Gamma$ and $\Delta$ are formula structures, then $\Gamma \odot \Delta$ and $\Gamma \owedge \Delta$ are formula structures. Each formula structure $\Gamma$ is associated with a formula $\mu(\Gamma)$ defined as follows:
(i) $\mu(A) = A$ for every $\mc{L}_{\mathrm{RBL}}$-formula $A$;
(ii) $\mu(\Gamma \odot\Delta) = \mu(\Gamma)\cdot \mu(\Delta)$;
(iii) $\mu(\Gamma \owedge \Delta) = \mu(\Gamma)\wedge \mu(\Delta)$.
Sequents are of the form $\Gamma \Rightarrow A$ such that $\Gamma$ is an $\mc{L}_{\mathrm{RBL}}$-formula structure and $A$ is an $\mc{L}_{\mathrm{RBL}}$-formula.

The sequent calculus $\mathsf{L_{RBL}}$ for $\mathsf{S_{RBL}}$ consists of the following axioms and rules:
\begin{displaymath}
(\mathrm{Id}) \quad A \Rightarrow A \quad(\top)\quad A\Rightarrow \top\quad (\bot)\quad \bot\Rightarrow A
\end{displaymath}
\begin{displaymath}
(\mathrm{\imp L}) \quad\frac{\Delta \Rightarrow A;  \quad \Gamma[B] \Rightarrow C}{\Gamma[\Delta \odot (A \imp B)] \Rightarrow C} \quad(\mathrm{\imp R})\quad \frac{A \odot \Gamma \Rightarrow B}{\Gamma \Rightarrow A \imp B}
\end{displaymath}
\begin{displaymath}
(\mathrm{\leftarrow L})\quad \frac{\Gamma[A] \Rightarrow C;\quad \Delta \Rightarrow B}{\Gamma[(A\leftarrow B) \odot \Delta] \Rightarrow C} \quad (\mathrm{\leftarrow R})\quad \frac{\Gamma \odot B \Rightarrow A}{\Gamma \Rightarrow A\leftarrow B}
\end{displaymath}
\begin{displaymath}
(\mathrm{\cdot L})\quad\frac{\Gamma[A \odot B] \Rightarrow C}{\Gamma[A \cdot B] \Rightarrow C} \quad (\mathrm{\cdot R}) \quad\frac{\Gamma \Rightarrow A; \quad \Delta \Rightarrow B}{\Gamma \odot \Delta \Rightarrow A \cdot B}
\end{displaymath}
\begin{displaymath}
\mathrm{(\wedge L)}\quad\frac{\Gamma[A\owedge B]\Rightarrow C}{\Gamma[A\wedge B]\Rightarrow C}\quad\mathrm{(\wedge R)}\quad\frac{\Gamma\Rightarrow A\quad \Gamma \Rightarrow B}{\Gamma \Rightarrow A\wedge B}
\end{displaymath}
\begin{displaymath}
(\mathrm{\vee L})\quad\frac{\Gamma[A]\Rightarrow C\quad \Gamma[B]\Rightarrow C}{\Gamma[A\vee B] \Rightarrow C}\quad(\mathrm{\vee R)}\quad \frac{\Gamma \Rightarrow A_i}{\Gamma\Rightarrow A_1\vee A_2}\quad(i=1,2)
\end{displaymath}
\begin{displaymath}
(\mathrm{\owedge C}) \quad\frac{\Gamma[\Delta\owedge \Delta]\Rightarrow A}{\Gamma [\Delta]\Rightarrow A}\quad (\mathrm{\odot C}) \quad \frac{\Gamma[(\Lambda\odot\Delta)\odot \Delta]\Rightarrow A}{\Gamma[\Lambda\odot\Delta]\Rightarrow A}\quad(\Lambda~\mathrm{is~not~empty})
\end{displaymath}
\begin{displaymath}
(\mathrm{\owedge E})\quad\frac{\Gamma[\Delta\owedge \Lambda]\Rightarrow A}{\Gamma[ \Lambda\owedge \Delta]\Rightarrow A}\quad
(\mathrm{Cut}) \quad\frac{\Delta \Rightarrow A; \quad \Gamma[A] \Rightarrow B}{\Gamma[\Delta] \Rightarrow B}
\end{displaymath}
\begin{displaymath}
(\mathrm{W}^1) \quad\frac{\Gamma[\Delta]\Rightarrow A}{\Gamma[\Delta'*\Delta]\Rightarrow A}\quad(\mathrm{W}^2) \quad\frac{\Gamma[\Delta]\Rightarrow A}{\Gamma[\Delta*\Delta']\Rightarrow A}\quad(*\in\{\owedge,\odot\})
\end{displaymath}
\begin{displaymath}
(\mathrm{\owedge A^1}) \quad \frac{\Gamma[(\Delta_1\owedge \Delta_2)\owedge \Delta_3] \Rightarrow A}{\Gamma[\Delta_1\owedge (\Delta_2\owedge \Delta_3)] \Rightarrow A}\quad (\mathrm{\owedge A^2}) \quad \frac{\Gamma[\Delta_1\owedge (\Delta_2\owedge \Delta_3)] \Rightarrow A}{\Gamma[(\Delta_1\owedge \Delta_2)\owedge \Delta_3] \Rightarrow A}
\end{displaymath}

It is known \cite{ML14} that $\mathsf{L_{RBL}}$ has the cut elimination, subformula property and disjunction property. Moreover, we obtain the sequent calculus $\mathsf{DFNL}$ from $\mathsf{L_{RBL}}$ by dropping $(\top)$, $(\bot)$, $(\mathrm{W^1})$, $(\mathrm{W^2})$ and $(\odot\mathrm{C})$. We prove in \cite{ML14} that residuated basic logic is a conservative extension of Visser's basic propositional logic ($\mathsf{BPL}$) in \cite{visser81}, i.e., for any  $\mathcal{L}_{{\mathrm{BPL}}}$-formula $A$, $\vdash_{\mathsf{BPL}} A$ iff $\vdash_{\mathrm{L_{RBL}}} \Imp A$.

\begin{theorem}\label{theorem:conservative1}
For any $\mathcal{L}_{{\mathrm{BPL}}}$-formula $A$, $\vdash_{\mathrm{L_{RBL}}} \top \Imp A$ iff $\vdash_{\mathrm{BPL}} A$
\end{theorem}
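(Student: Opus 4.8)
The plan is to prove the two implications separately, exploiting that $\mathsf{S_{RBL}}$ and $\mathsf{L_{RBL}}$ derive the same sequents and that, by the results recalled above, $\mathsf{L_{RBL}}$ admits cut elimination and the subformula property.

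For the direction from $\vdash_{\mathrm{BPL}} A$ to $\vdash_{\mathrm{L_{RBL}}} \top \Imp A$, I would argue by induction on a Hilbert-style derivation of $A$ in $\mathsf{BPL}$, showing that $\mathsf{S_{RBL}}$ proves $\top \Imp B$ for every $\mathsf{BPL}$-theorem $B$. The lattice and distribution axioms of $\mathsf{BPL}$ are immediate from $(\mathrm{D})$ and the $\wedge,\vee$ rules; the two characteristic axioms of the basic implication are the interesting cases. Reflexivity $A\imp A$ comes from $(\mathrm{W}_l)$: since $A\cdot\top\Imp A$, residuation $(\mathrm{R1})$ gives $\top\Imp A\imp A$. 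Transitivity $(A\imp B)\wedge(B\imp C)\Imp(A\imp C)$ is exactly where restricted contraction $(\mathrm{RC})$ is needed: writing $x=(A\imp B)\wedge(B\imp C)$, one shows $A\cdot x\leq B$, then $(A\cdot x)\cdot x\leq C$, and $(\mathrm{RC})$ yields $A\cdot x\leq(A\cdot x)\cdot x\leq C$, so $x\Imp A\imp C$ by $(\mathrm{R1})$. The admissibility of the $\mathsf{BPL}$ rules under the $\top\Imp(-)$ framing is then routine using $(\mathrm{Cut})$ and $(\wedge R)$.

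For the converse, suppose $\vdash_{\mathrm{L_{RBL}}}\top\Imp A$ with $A$ an $\mathcal{L}_{\mathrm{BPL}}$-formula. By cut elimination I take a cut-free derivation, and by the subformula property every formula occurring in it is a subformula of $\top$ or of $A$, hence contains neither $\cdot$ nor $\leftarrow$; thus the whole derivation lives in the $\{\wedge,\vee,\top,\bot,\imp\}$-fragment. It then remains to read this derivation off as a proof in $\mathsf{BPL}$. I would assign to each sequent $\Gamma\Imp B$ of the derivation a $\mathsf{BPL}$-formula obtained from a suitable translation of the antecedent structure $\Gamma$, and verify that every rule instance turns into a valid $\mathsf{BPL}$-inference, so that the endsequent yields $\vdash_{\mathrm{BPL}}A$.

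The main obstacle is precisely this translation. Even though no $\cdot$ occurs in the formulas, the structural product $\odot$ is still introduced by $(\imp L)$ and $(\imp R)$, and it cannot simply be read as lattice conjunction: $\mathsf{BPL}$ does not internalize modus ponens, so the naive reading $\odot\mapsto\wedge$ makes $(\imp L)$ unsound over $\mathsf{BPL}$. The real work is to find a translation that respects the fact that $\odot$ lies below $\wedge$ and is the residual partner of $\imp$, and under which $(\imp L)$, $(\imp R)$, the weakening rules $(\mathrm{W}^1),(\mathrm{W}^2)$ and the contraction $(\odot\mathrm{C})$ all become derivable $\mathsf{BPL}$-steps. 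An alternative, semantic route would establish the same fact algebraically: the $\{\wedge,\vee,\top,\bot,\imp\}$-reduct of every residuated basic algebra is a basic algebra (again reflexivity from $(\mathrm{w}_1)$ and transitivity from $(\mathrm{c_r})$), and conversely every basic algebra embeds into such a reduct; on the semantic side this embedding is the crux.
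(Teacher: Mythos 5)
The first thing to note is that this paper does not prove Theorem \ref{theorem:conservative1} at all: it is stated without proof and imported from Part I \cite{ML14}, so there is no in-paper argument to compare yours against, and your plan has to stand entirely on its own. It does not, and the fatal gap is the one you dismiss as routine. In the direction from $\vdash_{\mathrm{BPL}}A$ to $\vdash_{\mathrm{L_{RBL}}}\top\Imp A$ your treatment of the axioms is correct (reflexivity from $(\mathrm{w}_1)$, transitivity from $(\mathrm{c_r})$, exactly the standard arguments), but the standard Hilbert-style presentations of $\mathsf{BPL}$ with those formulas as axioms take modus ponens as their rule, and modus ponens is precisely what (Cut) and $(\wedge\mathrm{R})$ cannot simulate. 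Concretely: from $\top\Imp A$ and $\top\Imp A\imp B$, the natural derivation --- $(\imp\mathrm{L})$ with the axiom $B\Imp B$, then (Cut) --- produces $\top\odot\top\Imp B$, and $\mathsf{L_{RBL}}$ has no passage from $\top\odot\top\Imp B$ to $\top\Imp B$. Indeed, modus ponens is not even admissible in $\mathsf{L_{RBL}}$ at the level of generality at which a ``routine'' argument operates: $\top\Imp\top$ is an axiom and $\top\Imp\top\imp(\top\cdot\top)$ is derivable (apply $(\imp\mathrm{R})$ to $\top\odot\top\Imp\top\cdot\top$, which comes from $(\cdot\mathrm{R})$), yet $\top\Imp\top\cdot\top$ is not derivable, since it fails in the two-element residuated basic algebra with $x\cdot y=\bot$ and $x\imp y=y\leftarrow x=\top$ (residuation, $(\mathrm{w}_1)$, $(\mathrm{w}_2)$ and $(\mathrm{c_r})$ all hold trivially there, and $\top\cdot\top=\bot\neq\top$). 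So the modus ponens case of your induction can only be handled by exploiting that $B$ lies in the $\mathsf{BPL}$-fragment, and proving that restricted admissibility is essentially the theorem you are trying to prove; it is the same phenomenon as necessitation in modal logic (theoremhood-preserving but not truth-preserving in a fixed algebra), and the known routes to it go through the relational semantics shared by the two logics, not through (Cut). (If you instead formalize $\mathsf{BPL}$ in the Ardeshir--Ruitenburg consequence style, which has no modus ponens, this direction does become a rule-by-rule induction, but then the crucial case is the rule passing from $A\wedge B\vdash C$ to $A\vdash B\imp C$, whose soundness in $\mathbb{RBA}$ rests on $b\cdot a\leq a\wedge b$ via both weakening axioms; none of this appears in your sketch.)

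For the converse direction you concede the gap yourself: you correctly observe that reading $\odot$ as $\wedge$ makes $(\imp\mathrm{L})$ unsound over $\mathsf{BPL}$, and then you leave ``the real work'' --- either the actual translation of $\odot$-structures, or the embedding of every basic algebra into the $\{\wedge,\vee,\top,\bot,\imp\}$-reduct of some residuated basic algebra --- unproved. That embedding is the heart of the matter, not a side remark: it is essentially the representation of a basic algebra inside the up-set algebra of a transitive frame, where $U\cdot V=U\cap R[V]$ together with its two residuals makes the up-set lattice a residuated basic algebra. As it stands, each half of your proposal terminates by naming, but not proving, its key lemma, so what you have is a programme rather than a proof.
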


\section{Algebras and Finite Syntactical Models}
 A lattice order residuated groupoid ($\mathbf{LRG}$) is an algebra $(\mathsf{G},\wedge,\vee,\cdot,\leftarrow, \rightarrow)$ such that $(\mathsf{G},\wedge,\vee)$ is a lattice and $(\mathsf{G},\cdot,\leftarrow, \rightarrow)$ is a residuated groupoid. A lattice order residuated groupoid is distributive, if its lattice reduct $(\mathsf{G},\wedge,\vee)$ is distributive. A $\mathbf{LRG}$ is called bounded, if its lattice reduct $(\mathsf{G},\wedge,\vee)$ has a greatest element $\top$ and a least element $\bot$. Both algebras are denoted by $\mathbf{DLRG}$ and $\mathbf{BLRG}$, respectively. $\mathbf{BDLRG}$ is defined naturally. Obviously, a residuated basic algebra is an $\mathbf{BDLRG}$ satisfying conditions ($\mathrm{w_1}$), ($\mathrm{w_2}$) and ($\mathrm{c_r}$).

A way of constructing a lattice order residuated groupoid by using an closure operator has been considered in literatures \cite{FA08,OT99,Bus11}. We describe this construction briefly. Let $\mathbf{G}=$($\mathsf{G}$, $\cdot$) be a groupoid. We define the following operations over the powerset $\wp(\mathsf{G})$: 
\begin{align*}
U\odot V&=\{a\cdot b \in \mathsf{G}: a\in U, b\in V\}\\
U\imp V&=\{a\in \mathsf{G} :U\odot \{a\} \subseteq V\}\\
V\leftarrow U&=\{a\in \mathsf{G}: \{a\}\odot U\subseteq V\}\\
U\vee V&=U\cup V\\
U\wedge V&=U\cap V.
\end{align*}
The powerset $\wp(\mathsf{G})$ with these operations yields a complete distributive lattice order groupoid.

An operator $C:\wp(\mathsf{G}) \rightarrow \wp(\mathsf{G})$ is called a {\em closure operator} (shortly nucleus) on $\mathbf{G}$, if it satisfies the following conditions:
\begin{enumerate}
\item[$\quad$] (C1)~~$U\subseteq C(U)$.
\item[$\quad$] (C2)~~ if $U\subseteq V$, then $C(U)\subseteq C(V)$.
\item[$\quad$] (C3)~~ $C(C(U))\subseteq C(U)$.
\item[$\quad$] (C4)~~ $C(U)\odot C(V) \subseteq C(U \odot V)$.
\end{enumerate}
For $U\subseteq G$, U is called \textit{C-closed} if $U=C(U)$. By $\mathsf{C(G)}$ we denote the family of all $C$-closed subsets of $\mathsf{G}$. Let $U\otimes V=C(U\odot V)$ and $U\vee_C V=C(U\vee V)$. It is easy to check that $\mathbf{C(G)}=(\mathsf{C(G)},\odot,\wedge,\vee_C, \imp, \leftarrow)$ is a lattice order residuated groupoid which needs not to be distributive (\cite{GO07}), where the order is $\subseteq$. 

In \cite{BusF09}, Buszkowski and Farulewski introduce an interpolation lemma to construct a finite syntactical model for $\mathrm{DFNL(\Phi)}$. We recall some definitions and notations first. Henceforth, we always assume that $\Phi$ is a finite set of simple sequents ($A\Rightarrow B$). let $\mathrm{T}$ be a set of formulae. By a $\mathrm{T}$-sequent we mean a sequent such that all formulae occurring in it belong to $\mathrm{T}$. We write $\Phi \vdash_{S} \Gamma \Rightarrow_{\mathrm{T}}A$ if $\Gamma \Rightarrow A$ has a deduction from $\Phi$ in system $S$ which consists of $\mathrm{T}$-sequents only. Two formulae $A$ and $B$ are called $\mathrm{T}$-equivalence in $S$, if $\vdash_{S}A\Leftrightarrow B$.

\begin{lemma}[\cite{BusF09}]\label{lemma:dfnl}
Let $\mathrm{T}$ be a nonempty set of all subformulae of formulae in $\Gamma\Imp A$, $\Phi$ and closed under $\wedge$ and $\vee$. If $\Phi\vdash_{\mathrm{DFNL}}\Gamma[ \Delta] \Rightarrow_{\mathrm{T}} A$, then there exists $D\in \mathrm{T}$ such that $\Phi\vdash_{\mathrm{DFNL}} \Delta \Rightarrow_{\mathrm{T}} D$ and $ \Phi\vdash_{\mathrm{DFNL}}\Gamma[D] \Rightarrow_{\mathrm{T}} A$.
\end{lemma}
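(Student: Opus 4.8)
The plan is to prove the interpolation lemma by induction on the height of the derivation $\Phi\vdash_{\mathrm{DFNL}}\Gamma[\Delta]\Rightarrow_{\mathrm{T}} A$. Given a context $\Gamma[\cdot]$ with a designated subtree occupied by $\Delta$, I seek an \emph{interpolant} $D\in\mathrm{T}$ that simultaneously derives $\Delta\Rightarrow_{\mathrm{T}} D$ and $\Gamma[D]\Rightarrow_{\mathrm{T}} A$. The base case is $(\mathrm{Id})$, where the whole sequent is $\Delta\Rightarrow\Delta$ (so the hole is either the entire antecedent or sits inside an atomic formula): if $\Gamma[\cdot]$ is the trivial context, take $D=\Delta$ itself; otherwise the context is already a formula and we take $D=A$. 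The inductive step splits on the last rule applied and, crucially, on whether the displayed occurrence $\Delta$ lies strictly inside the active part of that rule or in a passive side-context.

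For each rule I would carry out the following. When the hole $\Delta$ is disjoint from the formulas manipulated by the last rule, the interpolant is obtained directly from the induction hypothesis applied to the premise(s), and the rule is simply reapplied to reconstruct $\Gamma[D]\Rightarrow A$ from the reduced context. The informative cases are those where $\Delta$ overlaps the principal structure. For the two-premise rules $(\imp\mathrm{L})$, $(\leftarrow\mathrm{L})$, $(\cdot\mathrm{R})$ and $(\vee\mathrm{L})$, when the hole straddles the split I would apply the induction hypothesis to each premise to get candidate interpolants and then combine them using the lattice connectives available in $\mathrm{T}$: for $(\vee\mathrm{L})$ the natural interpolant is $D_1\vee D_2$, using closure of $\mathrm{T}$ under $\vee$ together with $(\vee\mathrm{R})$ and $(\vee\mathrm{L})$; dually $(\wedge\mathrm{R})$ uses $D_1\wedge D_2$ and closure under $\wedge$. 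This is exactly where the hypothesis that $\mathrm{T}$ is closed under $\wedge$ and $\vee$ is indispensable, since otherwise the combined interpolant need not lie in $\mathrm{T}$.

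The main obstacle, and the delicate heart of the argument, will be the cases where the hole $\Delta$ is a proper substructure of a structure that the last rule contracts, expands, or rearranges, rather than coinciding with an active formula. In $(\cdot\mathrm{L})$, $(\wedge\mathrm{L})$ and the introduction rules the displayed $\Delta$ may be swallowed into a newly formed compound; in $(\owedge\mathrm{E})$, $(\owedge\mathrm{A}^1)$, $(\owedge\mathrm{A}^2)$ the structure around the hole is reshuffled, and I must track the occurrence of $\Delta$ through the rewriting so that the induction hypothesis can be invoked on a premise whose context genuinely contains $\Delta$. The subtlest subcase is whether the designated occurrence of $\Delta$ is wholly contained in one copy of a duplicated structure or spans the boundary of the structural operation; in the former the induction hypothesis applies to the premise and we rebuild the context, while in the latter the hole effectively is the whole displayed structure and we may take $D=\mu(\Delta)$, invoking $(\cdot\mathrm{R})$ and $(\wedge\mathrm{R})$ to recover $\Delta\Rightarrow D$. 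Throughout I rely on $\mathrm{DFNL}$ being free of $(\top)$, $(\bot)$ and the weakening and contraction rules, which is precisely why Buszkowski and Farulewski phrase the lemma for $\mathrm{DFNL}$: the absence of these rules keeps the case analysis finite and the interpolant confined to $\mathrm{T}$.
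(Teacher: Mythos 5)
Your skeleton (induction on the derivation, passive occurrences handled by reapplying the last rule, $D_1\vee D_2$ for $(\vee\mathrm{L})$, $D_1\wedge D_2$ for $(\wedge\mathrm{R})$) is the standard one, and it is also how the paper proves its own interpolation lemma for $\mathsf{L_{RBL}}$ (the DFNL statement itself is only cited from Buszkowski and Farulewski). But your treatment of the genuinely delicate cases is wrong. First, you group $(\imp\mathrm{L})$, $(\leftarrow\mathrm{L})$, $(\cdot\mathrm{R})$ with $(\vee\mathrm{L})$ and propose combining premise interpolants by lattice connectives when the hole contains the active structure. In DFNL this fails, precisely because weakening is absent. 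Take $(\imp\mathrm{L})$ with premises $\Delta'\Rightarrow A$ and $\Gamma''[\Delta''[B]]\Rightarrow C$ and hole $\Theta=\Delta''[\Delta'\odot(A\imp B)]$: for $D_1\wedge D_2$ (with $D_1$ interpolating $\Delta'$) you would need $\Theta\Rightarrow D_1$, which is not derivable from $\Delta'\Rightarrow D_1$ without weakening (e.g.\ $p\odot(p\imp q)\Rightarrow p$ is underivable in DFNL); for $D_1\vee D_2$ you would need $\Gamma''[D_1]\Rightarrow C$, which you do not have. The correct move involves no combination at all: take the interpolant $D$ of $\Delta''[B]$ in the second premise; then $\Gamma''[D]\Rightarrow C$ is given, and $\Theta\Rightarrow D$ follows by reapplying $(\imp\mathrm{L})$ to $\Delta'\Rightarrow A$ and $\Delta''[B]\Rightarrow D$. (In $(\cdot\mathrm{R})$ a straddling hole can only be the entire antecedent, in which case the succedent itself is an interpolant.)

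Second, contraction. Contrary to your closing remark, the DFNL of this paper retains $(\owedge\mathrm{C})$ — only $(\odot\mathrm{C})$, $(\mathrm{W^1})$, $(\mathrm{W^2})$, $(\top)$ and $(\bot)$ are dropped — so this case cannot be waved away, and you handle it backwards. When the hole $\Delta$ lies inside one copy of the contracted structure, say $\Delta'=\Delta''[\Delta]$, interpolating that single copy and ``rebuilding'' fails: after replacement the premise reads $\Gamma'[\Delta''[D]\owedge\Delta''[\Delta]]\Rightarrow A$, whose two components are no longer identical, so $(\owedge\mathrm{C})$ cannot be reapplied. One must interpolate both copies, obtaining $D_1$ and $D_2$, and take $D_1\wedge D_2$; this case, together with $(\wedge\mathrm{R})$, is the real reason $\mathrm{T}$ must be closed under $\wedge$. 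Finally, your fallback interpolant $D=\mu(\Delta)$ is inadmissible: $\mu(\Delta)$ is built with $\cdot$, and $\mathrm{T}$ is closed only under $\wedge$ and $\vee$, so $\mu(\Delta)$ need not belong to $\mathrm{T}$; producing an interpolant inside $\mathrm{T}$, via a $\mathrm{T}$-derivation, is the entire content of the lemma, since it is what makes the syntactic algebra $\mathbf{C_T}(\mathbf{G}(\mathrm{T^*}))$ finite.
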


Following \cite{Bus11,BusF09}, one can easily construct a finite syntactical model for any extensions of $\mathsf{DFNL}$ such that the above interpolation lemma holds. We briefly recall this construction here. Details can be found in \cite{Bus11}. Henceforth by $\overline{S}$ we mean an extension of $\mathsf{DFNL}$ satisfying Lemma \ref{lemma:dfnl}.

Let $\mathrm{T}$ be a nonempty set of formulae and closed under $\wedge$ and $\vee$. By $\mathrm{T^*}$, we denote the set of all formula structures formed out of formulae in $\mathrm{T}$. Similarly, $\mathrm{T^*[-]}$ denotes the set of all contexts in which all formulae belong to $\mathrm{T}$. $\mathbf{G}(\mathrm{T})=(\mathrm{T^*},\cdot)$ is a groupoid. Let $\Gamma[-] \in \mathrm{T^*[-]}$ and $A \in \mathrm{T}$. We define:
\begin{align*}
[\Gamma[-], A]= \{\Delta:\,\Delta\in \mathrm{T^*}~\mathrm{and}~\Phi\vdash_{\overline{S}} \Gamma[\Delta] \Rightarrow_{\mathrm{T}} A\}\\
[A]=[-, A]=\{\Gamma:\, \Gamma\in \mathrm{T^*}~\mathrm{and}~\Phi\vdash_{\overline{S}}\Gamma \Rightarrow_{\mathrm{T}} A\}
\end{align*}
Let $B(\mathrm{T})$ be the family of all sets $[\Gamma[-], A]$ defined above. Define $C_{\mathrm{T}}$ by:
\begin{align*}
C_{\mathrm{T}}(U)=\bigcap\{[\Gamma[-], A] \in B(\mathrm{T}): U\subseteq [\Gamma[-], A]\}
\end{align*}
It can be shown that $C_{\mathrm{T}}$ satisfies (C1)-(C4), and so $C_{\mathrm{T}}$ is an closure operator (\cite{BusF09}). The algebra $\mathbf{C_T}(\mathbf{G}(\mathrm{T^*}))$ satisfies all the laws defining lattice order residuated groupoid, but needs not to be distributive. The following equations are true in $\mathbf{C_T}(\mathbf{G}(\mathrm{T^*}))$ provided that all formulae appearing in them belong to $\mathrm{T}$ (\cite{BusF09}):
\begin{equation}
[A] \otimes [B]=[A \cdot B], \quad [A]\imp [B]=[A\imp B],\quad [A]\leftarrow[B]=[A\leftarrow B]
\end{equation}
\begin{equation}
[A]\cap[B]=[A\wedge B], \quad [A]\vee_C[B]=[A\vee B]
\end{equation}
Since $\mathrm{T}$ is closed under $\wedge$ and $\vee$, by Lemma 2.1 and equations (I) and (II), the algebra $\mathbf{C_T}(\mathbf{G}(\mathrm{T^*}))$ is a $\mathbf{BLRG}$. In fact one can prove that for any $U\in \mathsf{C_T(\mathsf{T^*})}$, there exists a formula $A\in \mathrm{T}$ such that $U=[A]$. Obviously $\mathrm{T}$ is finite up to the relation of $T$-equivalence in $\overline{S}$. Hence there are only finitely many sets $[A]$. Then $C_{\mathrm{T}}(\mathrm{T^*})$ is finite. By Lemma 2.1 and the distributive law, the following inequation holds in $C_{\mathrm{T}}(\mathrm{T^*})$:
\begin{equation}
U\wedge(V\vee_c W)\subseteq (U\wedge V)\vee_c (U\wedge W)
\end{equation}

\begin{theorem}
The algebra $\mathbf{C_T}(\mathbf{G}(\mathrm{T^*}))$ is finite and belongs to $\mathbb{DLRG}$.
\end{theorem}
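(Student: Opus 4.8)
The plan is to read the statement as a summary of the facts assembled in the preceding paragraphs, so the proof will consist of collecting them and checking the two remaining points: that the carrier is finite, and that the lattice reduct is distributive. Since it has already been established that $\mathbf{C_T}(\mathbf{G}(\mathrm{T^*}))$ satisfies all the laws of a lattice order residuated groupoid and is bounded, i.e.\ is a $\mathbf{BLRG}$, only distributivity remains in order to upgrade it to a member of $\mathbb{DLRG}$.

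For finiteness, first I would record that the assignment $A\mapsto[A]$ factors through $\mathrm{T}$-equivalence: if $\vdash_{\overline{S}}A\Leftrightarrow B$ with $A,B\in\mathrm{T}$, then one application of $(\mathrm{Cut})$ in each direction (with the empty context) gives $[A]\subseteq[B]$ and $[B]\subseteq[A]$, hence $[A]=[B]$; all the sequents involved are $\mathrm{T}$-sequents since $A,B\in\mathrm{T}$. Combined with the representation fact already stated — that every $U\in C_{\mathrm{T}}(\mathrm{T^*})$ equals $[A]$ for some $A\in\mathrm{T}$ — this exhibits the carrier as the image of the set of $\mathrm{T}$-equivalence classes under $A\mapsto[A]$. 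As $\mathrm{T}$ is finite up to $\mathrm{T}$-equivalence, there are only finitely many distinct sets $[A]$, so $C_{\mathrm{T}}(\mathrm{T^*})$ is finite.

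For distributivity I would prove the lattice identity $U\wedge(V\vee_C W)=(U\wedge V)\vee_C(U\wedge W)$ by establishing the two inclusions separately. The inclusion $(U\wedge V)\vee_C(U\wedge W)\subseteq U\wedge(V\vee_C W)$ is the direction valid in every lattice: both $U\wedge V$ and $U\wedge W$ are contained in the $C$-closed set $U\wedge(V\vee_C W)=U\cap(V\vee_C W)$, so by monotonicity and idempotence of $C_{\mathrm{T}}$ their $\vee_C$-join is contained in it as well. The reverse inclusion $U\wedge(V\vee_C W)\subseteq(U\wedge V)\vee_C(U\wedge W)$ is precisely inequation (III), already derived from Lemma~\ref{lemma:dfnl} and the distributive law. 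Putting the two inclusions together yields the full distributive law, so the (bounded) lattice reduct is distributive and the algebra lies in $\mathbb{DLRG}$.

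The only genuinely delicate ingredient is inequation (III): unlike the easy inclusion it is not a lattice triviality, and it rests on the interpolation Lemma~\ref{lemma:dfnl}, which permits a structure $\Gamma[\Delta]$ deriving a $\mathrm{T}$-sequent to be replaced by a single interpolant formula $D\in\mathrm{T}$, so that the distributive law at the level of formulae can be transported to the closure algebra. Since that lemma and the inequation it yields are already available, I anticipate no further obstacle, and the remaining work is exactly the bookkeeping described above.
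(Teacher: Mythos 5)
Your proof is correct and follows essentially the same route as the paper: finiteness comes from the representation of every $C_{\mathrm{T}}$-closed set as some $[A]$ with $A\in\mathrm{T}$ together with the finiteness of $\mathrm{T}$ up to $\mathrm{T}$-equivalence, and distributivity comes from inequation (III) (derived from the interpolation Lemma~\ref{lemma:dfnl} and the distributive law) combined with the converse inclusion that holds in any lattice. If anything, you make explicit two steps the paper leaves implicit (that $\mathrm{T}$-equivalent formulae yield the same set $[A]$, and the easy half of the distributive identity), which is a faithful filling-in rather than a different argument.
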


Let $\bot_C=C(\emptyset)$ and $\top=\mathsf{G}$. Then the algebra $\mathbf{C_T}(\mathbf{G}(\mathrm{T^*}))$ is a finite $\mathbf{BDLRG}$.

\section{Interpolation and FMP}
By the FMP of $\mathsf{L_{RBL}}$ we mean that any sequent $\Gamma\Rightarrow A$ not provable in $\mathsf{L_{RBL}}$ is refutable in a residuated basic algebra. The algebraic completeness of $\mathsf{L_{RBL}}$ w.r.t $\mathbb{RBA}$ follows from FMP immediately. By the SFMP of $\mathrm{L_{RBL}}$ we mean that for any sequent $\Gamma\Rightarrow A$ not derivable from $\Phi$ in $\mathrm{L_{RBL}}$ there exists  a residuated basic algebra $\mathbf{A}$ such that all sequents in $\Phi$ are valid in $\mathbf{A}$ but $\Gamma\Rightarrow A$ is not.

A model for $\mathsf{L_{RBL}}$ is a pair $(\mathbf{G}, \sigma)$ such that $\mathbf{G}\in \mathbb{RBA}$ and $\sigma$ is an valuation in $\mathbf{G}$. Each valuation $\sigma$ is extended for formulae and formula structures as follows:
\begin{displaymath}
\sigma(A\cdot B)=\sigma(A)\cdot \sigma(B),\quad \sigma(\top)=\top, \quad\sigma(\bot)=\bot
\end{displaymath}
\begin{displaymath}
\sigma(A\imp B)=\sigma(A)\imp \sigma(B),\quad \sigma(A \leftarrow B)=\sigma(A)\leftarrow\sigma(B)
\end{displaymath}
\begin{displaymath}
 \sigma(A\wedge B)=\sigma(A)\wedge\sigma(B),\quad \sigma(A\vee B)=\sigma(A)\vee\sigma(B)
\end{displaymath}
\begin{displaymath}
\sigma(\Gamma \odot \Delta)=\sigma(\Gamma)\cdot\sigma(\Delta),\quad \sigma(\Gamma \owedge \Delta)=\sigma(\Gamma)\wedge \sigma(\Delta)
\end{displaymath}
A sequent $\Gamma \Rightarrow A$ is true in model $(\mathbf{G}, \sigma)$, if $\sigma(\Gamma)\leq \sigma(A)$ in $\mathbf{G}$. 

We prove the interpolation lemma for $\mathsf{L_{RBL}}$ and employ the proof technique described in section 3 to show the SFMP for $\mathrm{L_{RBL}}$. Let $\mathrm{T}$ be a set of $\mathcal{L}_{\mathrm{RBA}}$-formulae containing $\bot$ and $\top$ and closed under $\wedge$ and $\vee$.

\begin{lemma}\label{lemma:int}
If $\Phi \vdash_{\mathsf{L_{RBL}}}\Gamma[ \Delta] \Rightarrow_{\mathrm{T}} A$, then there exists $D\in \mathrm{T}$ such that $\Phi\vdash_{\mathsf{L_{RBL}}} \Delta \Rightarrow_{\mathrm{T}} D$ and $\Phi\vdash_{\mathsf{L_{RBL}}}\Gamma[D] \Rightarrow_{\mathrm{T}} A$.
\end{lemma}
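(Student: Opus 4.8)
The plan is to prove Lemma~\ref{lemma:int} by induction on the height of a derivation of $\Gamma[\Delta]\Rightarrow_{\mathrm{T}}A$ from $\Phi$ in $\mathsf{L_{RBL}}$, following the proof of Lemma~\ref{lemma:dfnl} and adding cases only for the rules that distinguish $\mathsf{L_{RBL}}$ from $\mathsf{DFNL}$, namely $(\top)$, $(\bot)$, $(\mathrm{W}^1)$, $(\mathrm{W}^2)$ and $(\odot\mathrm{C})$. All the remaining rules of $\mathsf{L_{RBL}}$ — the logical rules, the additive structural rules $(\owedge\mathrm{C})$, $(\owedge\mathrm{E})$, $(\owedge\mathrm{A}^1)$, $(\owedge\mathrm{A}^2)$, and $(\mathrm{Cut})$ — already occur in $\mathsf{DFNL}$ and are treated exactly as in Lemma~\ref{lemma:dfnl}: when $\Delta$ lies in the passive part of the context one applies the induction hypothesis to the premise(s) and reapplies the rule, and when $\Delta$ meets the active part one combines the premise interpolants (for instance $D_1\vee D_2$ for $(\vee\mathrm{L})$ and $D_1\wedge D_2$ for $(\wedge\mathrm{R})$), using that $\mathrm{T}$ is closed under $\wedge$ and $\vee$. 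The axioms $(\mathrm{Id})$ and the members of $\Phi$ are base cases whose antecedent is a single formula, so the displayed substructure is the whole antecedent and the interpolant is read off directly.

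The two bound axioms are immediate new base cases: for $A\Rightarrow\top$ take $D=\top$ and for $\bot\Rightarrow A$ take $D=\bot$, both of which lie in $\mathrm{T}$. For the weakening rules $(\mathrm{W}^1)$ and $(\mathrm{W}^2)$, if $\Delta$ lies inside the weakened-in structure $\Delta'$ I take $D=\top$, so that $\Delta\Rightarrow\top$ holds and the required sequent is recovered from the premise by a single weakening; if instead $\Delta$ is passive or contains the premise structure $\Delta_0$, the interpolant for the premise transports unchanged and weakening is reapplied.

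The genuinely new work is the restricted multiplicative contraction $(\odot\mathrm{C})$, with premise $\Gamma[(\Lambda\odot\Delta')\odot\Delta']\Rightarrow A$ and conclusion $\Gamma[\Lambda\odot\Delta']\Rightarrow A$. Here I split on the position of the displayed substructure $\Delta$. If $\Delta$ is passive, or lies inside $\Lambda$ (which occurs only once in the premise), the premise interpolant transports and $(\odot\mathrm{C})$ is reapplied. If $\Delta$ is the whole contracted structure $\Lambda\odot\Delta'$, I interpolate the whole premise structure $(\Lambda\odot\Delta')\odot\Delta'$, obtaining $D\in\mathrm{T}$ with $(\Lambda\odot\Delta')\odot\Delta'\Rightarrow D$ and $\Gamma[D]\Rightarrow A$, and then apply $(\odot\mathrm{C})$ to the former sequent to get $\Lambda\odot\Delta'\Rightarrow D$.

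The hard part will be the case where $\Delta$ sits strictly inside the component $\Delta'$ that contraction duplicates, so that the premise carries two linked copies of $\Delta$; to reapply $(\odot\mathrm{C})$ I must replace both copies by one and the same interpolant $D$ with $\Delta\Rightarrow_{\mathrm{T}}D$, thereby reaching $\Gamma[(\Lambda\odot\Delta'[D])\odot\Delta'[D]]\Rightarrow A$ before contracting. This cannot be reduced to interpolating the two copies independently, since the two occurrences are genuinely correlated by the contraction and the meet of two separately obtained interpolants need not serve as a common replacement. The additive contraction $(\owedge\mathrm{C})$ poses the analogous problem and is already resolved in the proof of Lemma~\ref{lemma:dfnl}, which therefore supplies the template — a simultaneous interpolation assigning a common formula to the copies a contraction merges. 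The main obstacle is to transfer that device to the multiplicative setting: because $\odot$ admits neither exchange nor the flexibility of the additive connective and the duplication has the rigid shape $(\Lambda\odot\Delta')\odot\Delta'$, the two copies cannot be recombined as freely as additive copies, and I expect the delicate point to be verifying that a single common interpolant can still always be chosen and kept synchronised as the induction descends through rules acting inside only one copy.
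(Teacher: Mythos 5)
Your outline coincides with the paper's proof in every case except the one you flag as ``the hard part,'' and there you stop: the case of $(\odot\mathrm{C})$ with $\Delta$ strictly inside the duplicated component $\Delta'$ is announced as an obstacle and left with an expectation rather than an argument, so the only genuinely new case is not proved. Moreover, your stated reason for rejecting the obvious solution is wrong. You claim the two occurrences are ``genuinely correlated'' and that ``the meet of two separately obtained interpolants need not serve as a common replacement''; in $\mathsf{L_{RBL}}$ it always does, and this is exactly how the paper argues (case (1.3) for $(\owedge\mathrm{C})$, declared ``quite similar'' for $(\odot\mathrm{C})$ in case (1.4)). Writing $\Delta'=\Delta''[\Delta]$, the induction hypothesis applied to the two disjoint copies in the premise yields $D_1,D_2\in\mathrm{T}$ with $\Phi\vdash_{\mathsf{L_{RBL}}}\Delta\Rightarrow_{\mathrm{T}}D_i$ and $\Phi\vdash_{\mathsf{L_{RBL}}}\Gamma[(\Lambda\odot\Delta''[D_1])\odot\Delta''[D_2]]\Rightarrow_{\mathrm{T}}A$. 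Since $D_1\wedge D_2\Rightarrow D_i$ is $\mathrm{T}$-derivable from $(\mathrm{Id})$, $(\mathrm{W}^1)/(\mathrm{W}^2)$ and $(\wedge\mathrm{L})$ (here $\mathrm{T}$ is closed under $\wedge$), a cut at each of the two formula occurrences --- the paper's ``(W) and $(\wedge\mathrm{L})$'' move --- replaces both $D_1$ and $D_2$ by the single formula $D_1\wedge D_2$, giving $\Gamma[(\Lambda\odot\Delta''[D_1\wedge D_2])\odot\Delta''[D_1\wedge D_2]]\Rightarrow_{\mathrm{T}}A$. Now the two copies are syntactically identical, $(\odot\mathrm{C})$ applies, and $(\wedge\mathrm{R})$ gives $\Delta\Rightarrow_{\mathrm{T}}D_1\wedge D_2$, so $D_1\wedge D_2$ is the interpolant. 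The ``rigid shape'' of $(\Lambda\odot\Delta')\odot\Delta'$ is irrelevant: the equalisation happens at two formula occurrences via cut, which is context-insensitive, so no exchange for $\odot$ and no ``synchronisation as the induction descends'' is needed --- the copies are made equal \emph{after} the induction hypothesis is invoked, not during the descent. Note also that this is precisely the device of the additive template you yourself cite, which you simultaneously assert cannot transfer; it transfers verbatim, and in $\mathsf{L_{RBL}}$ it is even easier than in weakening-free systems because $(\mathrm{W}^1)/(\mathrm{W}^2)$ make every context antitone in the required sense.

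One place where your instinct to be careful is sound, though you do not articulate it: obtaining $D_1$ and $D_2$ for the two copies requires the induction hypothesis in a simultaneous (multi-hole) form, since interpolating copy one and then trying to interpolate copy two inside the \emph{resulting} sequent is illegitimate --- that sequent no longer has a shorter derivation. The paper glosses over this by asserting both interpolants ``by inductive hypothesis,'' and a clean write-up should state and prove the lemma for finitely many pairwise disjoint designated substructures (the same single-step argument goes through hole by hole). But flagging that refinement does not repair your proposal: as it stands, the decisive case of $(\odot\mathrm{C})$ is missing, and the method you dismiss is the one that closes it.
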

\begin{proof}
 If $\vdash_{\mathrm{L_{RBL}}} \Gamma[\Delta]\Rightarrow A$ and formula $D$ satisfying the properties given statement of lemma, then we call $D$ an interpolant of $ \Delta$. 

The proof proceeds by induction on $\mathrm{T}$-derivation of $\Gamma[ \Delta] \Rightarrow A$. The case of axioms are easy. For $A \Rightarrow A$, $A\Rightarrow \top$ and $\bot\Rightarrow A$, we have $\Delta=A$ or $\Delta=\bot$. Hence $A$ and $\bot$ are the interplants of $\Delta$, respectively.

Let $\Gamma[\Delta ]\Rightarrow A$ be the conclusion of the rule $\mathrm{\mathrm{R}}$. For the case $\mathrm{R=(Cut)}$, it is easy. If $\Delta$ comes from one premise of (Cut), then one takes an interpolant from this premise. Otherwise, $\Delta$ comes from $\Delta'[C]$ in a premise where $C$ is the cut formula. Then an interpolant of $\Delta'[C]$ is also one of $\Delta$. Let us consider other rules.

(1) Assume that $\Delta$ contains no formula or structure operation introduced by $\mathrm{R}$ (no active formula or structure operation). Consider the following subcases.

(1.1) $\mathrm{R}=\mathrm{(\wedge R)}$. Assume that the premises are $\Gamma[ \Delta] \Rightarrow A_1$ and $\Gamma[ \Delta ] \Rightarrow A_2$, and the conclusion is $\Gamma[ \Delta] \Rightarrow A_1\wedge A_2$. By induction hypothesis, there are interpolants $D_1$, $D_2$ such that $\Phi\vdash_{\mathrm{L_{RBL}}} \Delta  \Rightarrow_{\mathrm{T}} D_1$,  $\vdash_{\mathrm{L_{RBL}}}\Gamma[D_1] \Rightarrow_{\mathrm{T}} A_1$, $\vdash_{\mathrm{L_{RBL}}} \Delta  \Rightarrow_{\mathrm{T}} D_2$  and $\vdash_{\mathrm{L_{RBL}}} \Gamma[D_2]\Rightarrow_{\mathrm{T}} A_2$.  Then one gets $\vdash_{\mathrm{L_{RBL}}} \Delta  \Rightarrow_{\mathrm{T}} D_1\wedge D_2$ by $\mathrm{(\wedge R)}$. By (W), ($\mathrm{\wedge L}$) and ($\mathrm{\wedge R}$), one obtains $\vdash_{\mathrm{L_{RBL}}} \Gamma[D_1\wedge D_2] \Rightarrow_{\mathrm{T}} A_1\wedge A_2$.

(1.2) $\mathrm{R}=(\mathrm{\vee L})$. Assume that the premises are $\Gamma[B][\Delta] \Rightarrow A$ and $\Gamma[C][\Delta] \Rightarrow A$, and the conclusion is $\Gamma[B\vee C][\Delta] \Rightarrow A$. By induction hypothesis, there are interpolants $D_1$, $D_2$ of $\Delta$ in the premises. Then $D_1\vee D_2$ is an interpolant of $\Delta$ by (W), ($\mathrm{\wedge L}$), ($\mathrm{\vee L}$) and ($\mathrm{\wedge R}$).

(1.3) $\mathrm{R}=\mathrm{(\owedge C)}$. Assume that the premise is $\Gamma'[\Delta' \owedge \Delta'] \Rightarrow A$ and the conclusion is $\Gamma'[ \Delta']\Rightarrow A$. If $\Delta'$ is contained in $\Delta$ including the case $\Delta=\Delta'$, then by induction hypothesis, the interpolant $D$ of the source of $\Delta$ in the premise is also an interpolant of $\Delta$ in the conclusion. Otherwise, assume $\Delta'=\Delta''[\Delta]$. By inductive hypothesis, there exist $D_1, D_2\in \mathrm{T}$ such that $\vdash_{\mathrm{L_{RBL}}} \Delta  \Rightarrow_{\mathrm{T}} D_1$, $\vdash_{\mathrm{L_{RBL}}} \Delta  \Rightarrow_{\mathrm{T}} D_2$ and $\vdash_{\mathrm{L_{RBL}}}\Gamma'[\Delta''[D_1]\owedge \Delta''[D_2]] \Rightarrow_T A$. By ($\mathrm{\wedge R}$), one gets $\vdash_{\mathrm{L_{RBL}}} \Delta  \Rightarrow_{\mathrm{T}} D_1\wedge D_2$.
By (W) and ($\mathrm{\wedge L}$), one obtains $\vdash_{\mathrm{L_{RBL}}}\Gamma'[\Delta''[D_1\wedge D_2]\owedge \Delta''[D_1\wedge D_2]] \Rightarrow_T A$. Hence by ($\owedge C$), $\vdash_{\mathrm{L_{RBL}}}\Gamma'[\Delta''[D_1\wedge D_2]] \Rightarrow_{\mathrm{T}} A$. Hence $D_1\wedge D_2 \in \mathrm{T}$  is an interpolant of $\Delta$.

(1.4) $\mathrm{R}=\mathrm{(\odot C)}$. The proof is quite similar to the case $\mathrm{R}=\mathrm{(\owedge C)}$.

(1.5) $\mathrm{R}=\mathrm{(\owedge E)}$, $\mathrm{(\owedge A^1)}$ or $\mathrm{(\owedge A^2)}$. The proof is quite similar to the first subcase of the case $\mathrm{R}=\mathrm{(\owedge C)}$,

(1.6) For the other cases, $\Delta$ must come from exactly one premise of R. Then an interpolant of $\Delta$ in this premise is also an interpolant of $\Delta$ in the conclusion.

(2) Assume that $ \Delta $ contains active formula or structure operation. If $\Delta$ is a single formula $E$, then $E$ is an interpolant of $\Delta$. Otherwise, let us consider the following subcases.

(2.1) $\mathrm{R=(\backslash L)}$ or $\mathrm{R=(/ L)}$. Let $\mathrm{R=(\backslash L)}$. Assume that the premises are $\Gamma'[C]\Rightarrow A$ and $\Delta'\Rightarrow B$, and the conclusion is $\Gamma'[\Delta'\circ B\backslash C]\Rightarrow A$. Then, $\Delta $ contains $\Delta'\circ B\backslash C$. Assume that $\Delta''[C]$ occurs in $\Gamma'[C]$, and $\Delta=\Delta''[\Delta'\circ B\backslash C]$. Then an interpolant $D$ of $\Delta''[C]$ is also an interpolant of $ \Delta $. For ($\mathrm{/L}$), the arguments is similar.

(2.2) $R=\mathrm{(\vee L)}$. Assume that $\Delta =\Delta'[B_1\vee B_2]$, the premises are $\Gamma[ \Delta'[B_1] ]$ $ \Rightarrow A$ and $\Gamma[ \Delta'[B_2] ] \Rightarrow A$, and the conclusion is $\Gamma[\Delta'[B_1\vee B_2]]\Rightarrow A$. Let $D_1$ be an interpolant of $ \Delta'[B_1]$ in the first premise and $D_2$ be an interpolant of $ \Delta'[B_2]$ in the second premise. Hence $D_1\vee D_2$ is an interpolant of $ \Delta $ in the conclusion by  ($\mathrm{\vee R}$) and ($\mathrm{\vee L}$).

(2.3) $\mathrm{R=(\wedge L)}$ or $\mathrm{R=(\cdot L)}$. Let $\mathrm{R=(\wedge L)}$. Assume that $\Delta=\Delta'[B\wedge C]$, the premise is $\Gamma'[B\owedge C]\Rightarrow A$, and the conclusion is $\Gamma'[B\wedge C]\Rightarrow A$. Then $\Delta'[B\owedge C]$ occurs in $\Gamma'[B\owedge C]$. Hence the interpolant $D$ of $\Delta'[B\owedge C]$ is also an interpolant of $\Delta$ in the conclusion. The arguments for ($\mathrm{\cdot L}$) is similar.

(2.4) $\mathrm{R=\mathrm{(W^1)}}$. Assume that the premise of is $\Gamma'[\Upsilon]\Rightarrow A$ and the conclusion is $\Gamma'[\Upsilon*\Delta']\Rightarrow A$. If $\Delta=\Delta'$ or $\Delta$ is contained in $\Delta'$ then $D=\top$ is an interpolant of $\Delta$ in the conclusion. Otherwise, assume that $\Delta$ is obtained from $\Delta''$. By induction hypothesis, the interpolant of $\Delta''$ is also an interpolant of $\Delta$ in the conclusion.
\end{proof}

Let $\mathrm{T}$ be a set of $\mathcal{L}_{\mathrm{RBA}}$-formulae containing $\bot$ and $\top$ and closed under $\wedge$ and $\vee$. $\mathbf{C_T}(\mathbf{G}(\mathrm{T^*}))$ is defined as above. Consequently, $\mathbf{C_T}(\mathbf{G}(\mathrm{T^*}))$ is a finite $\mathbb{BDLRG}$. Further we show that the following inequations hold in $\mathbf{C_T}(\mathbf{G}(\mathrm{T^*}))$:
\begin{displaymath}
U\otimes V\subseteq U,\quad U\otimes V\subseteq V,\quad
U\otimes (V\otimes V)\subseteq U \otimes V.
\end{displaymath}
It suffices to show that $[A]\otimes[B]\subseteq [A]$, $[A]\otimes [B] \subseteq [B]$ and $[A]\otimes [B]\subseteq ([A] \otimes [B])\otimes [B]$. By equations (II) and (I), it suffices to show that $[A\cdot B]\subseteq [A]$, $[A\cdot B]\subseteq [B]$ and $[A\cdot B]\subseteq [(A\cdot B)\cdot B]$. Obviously, since $A\cdot B\Rightarrow A$ $A\cdot B \Rightarrow B$ and $A\cdot B\Rightarrow (A\cdot B)\cdot B$ are axioms in $\mathrm{L_{RBL}}$, these inequations hold in $\mathbf{C_T}(\mathbf{G}(\mathrm{T^*}))$.
Hence we obtain the following theorem.

\begin{theorem}
The algebra $\mathbf{C_T}(\mathbf{G}(\mathrm{T^*}))$ is a finite residuated basic algebra.
\end{theorem}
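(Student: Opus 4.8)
The plan is to build on the preceding development, which already shows that $\mathbf{C_T}(\mathbf{G}(\mathrm{T}^{*}))$ is a finite $\mathbf{BDLRG}$ (with $\bot_C=C(\emptyset)$ and $\top=\mathsf{G}$); it remains only to verify the three axioms $(\mathrm{w}_1)$, $(\mathrm{w}_2)$ and $(\mathrm{c_r})$ that distinguish a residuated basic algebra from an arbitrary $\mathbf{BDLRG}$. Recalling that the order is $\subseteq$ and the product is $U\otimes V=C_{\mathrm{T}}(U\odot V)$, I would actually establish the sharper inequations
\begin{displaymath}
U\otimes V\subseteq U,\qquad U\otimes V\subseteq V,\qquad U\otimes V\subseteq (U\otimes V)\otimes V
\end{displaymath}
for all closed $U,V$; the first two specialise to $(\mathrm{w}_1)$ and $(\mathrm{w}_2)$ (taking $V=\top$ and $U=\top$ respectively), and the third is precisely $(\mathrm{c_r})$.

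For the two weakening inequations I would argue directly on formula structures. Since $U$ is $C_{\mathrm{T}}$-closed it suffices to prove $U\odot V\subseteq U$; writing $U=[A]$ by the representation result, any $\Gamma\in U$ and $\Delta\in V$ satisfy $\Phi\vdash_{\mathsf{L_{RBL}}}\Gamma\Rightarrow_{\mathrm{T}} A$, and one application of $(\mathrm{W}^{2})$ yields $\Phi\vdash_{\mathsf{L_{RBL}}}\Gamma\odot\Delta\Rightarrow_{\mathrm{T}} A$, i.e. $\Gamma\odot\Delta\in[A]=U$. The inclusion $U\odot V\subseteq V$ is symmetric, using $(\mathrm{W}^{1})$. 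No new formulae appear, so these stay $\mathrm{T}$-derivations.

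The contraction inequation is the step I expect to require the most care, since its right-hand side is a double closure $(U\otimes V)\otimes V=C_{\mathrm{T}}\!\left(C_{\mathrm{T}}(U\odot V)\odot V\right)$. Again it suffices to show $U\odot V\subseteq (U\otimes V)\otimes V$, and I would do this through the membership criterion for $C_{\mathrm{T}}$: fix a context $\Gamma'[-]$ and $A'\in\mathrm{T}$ with $(U\otimes V)\odot V\subseteq[\Gamma'[-],A']$ and deduce $\Phi\vdash_{\mathsf{L_{RBL}}}\Gamma'[\Gamma\odot\Delta]\Rightarrow_{\mathrm{T}} A'$ for $\Gamma\in U$, $\Delta\in V$. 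The key observation is that $\Gamma\odot\Delta\in U\odot V\subseteq U\otimes V$ by (C1), so $(\Gamma\odot\Delta)\odot\Delta\in (U\otimes V)\odot V$ and hence $\Phi\vdash_{\mathsf{L_{RBL}}}\Gamma'[(\Gamma\odot\Delta)\odot\Delta]\Rightarrow_{\mathrm{T}} A'$; a single use of $(\odot\mathrm{C})$ — legitimate because the contracted component $\Gamma$ is nonempty — then gives $\Phi\vdash_{\mathsf{L_{RBL}}}\Gamma'[\Gamma\odot\Delta]\Rightarrow_{\mathrm{T}} A'$. Thus $\Gamma\odot\Delta$ lies in every basic set containing $(U\otimes V)\odot V$, i.e. in $(U\otimes V)\otimes V$.

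A shorter route is tempting: using equation (I) to write $[A]\otimes[B]=[A\cdot B]$ and the fact that $[C]\subseteq[D]$ iff $\Phi\vdash_{\mathsf{L_{RBL}}} C\Rightarrow_{\mathrm{T}} D$ (forward by taking $\Gamma=C$, backward by $(\mathrm{Cut})$), the three inclusions reduce to derivability of $A\cdot B\Rightarrow A$, $A\cdot B\Rightarrow B$ and $A\cdot B\Rightarrow (A\cdot B)\cdot B$, which follow from $(\mathrm{W}^{2})$, $(\mathrm{W}^{1})$ and $(\odot\mathrm{C})$ together with $(\cdot\mathrm{L})$ and $(\cdot\mathrm{R})$. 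I expect the only real subtlety to sit exactly here: this reduction is licensed by equation (I) only when $A\cdot B$ and $(A\cdot B)\cdot B$ belong to $\mathrm{T}$, whereas $\mathrm{T}$ is closed merely under $\wedge$ and $\vee$; the structural argument above avoids the point, since it only ever manipulates structures inside $\mathrm{T}^{*}$ via $(\mathrm{W}^{1})$, $(\mathrm{W}^{2})$ and $(\odot\mathrm{C})$. With the three inequations established, $\mathbf{C_T}(\mathbf{G}(\mathrm{T}^{*}))$ validates $(\mathrm{w}_1)$, $(\mathrm{w}_2)$ and $(\mathrm{c_r})$, and is therefore a finite residuated basic algebra.
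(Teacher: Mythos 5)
Your proof is correct, and while it follows the paper's overall decomposition (the preceding construction already yields a finite $\mathbf{BDLRG}$, so only $(\mathrm{w}_1)$, $(\mathrm{w}_2)$ and $(\mathrm{c_r})$ remain to be checked), you verify the three inequations by a genuinely different --- and in fact more careful --- argument. The paper takes exactly the ``shorter route'' you describe: writing closed sets as $[A]$, $[B]$, it invokes equations (I) and (II) to reduce the inclusions to $[A\cdot B]\subseteq[A]$, $[A\cdot B]\subseteq[B]$ and $[A\cdot B]\subseteq[(A\cdot B)\cdot B]$, and then appeals to derivability of the corresponding sequents in $\mathsf{L_{RBL}}$. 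The subtlety you isolate is real: equations (I) and (II) are asserted only provided all formulae appearing in them belong to $\mathrm{T}$, and since $\mathrm{T}$ is closed merely under $\wedge$ and $\vee$, the formulae $A\cdot B$ and $(A\cdot B)\cdot B$ need not lie in $\mathrm{T}$; indeed, if $A\cdot B\notin\mathrm{T}$ then $[A\cdot B]=\emptyset$ by the definition of $\Rightarrow_{\mathrm{T}}$, so the reduction via (I) is not licensed as stated. Your structural argument --- proving $U\odot V\subseteq U$, $U\odot V\subseteq V$ and $U\odot V\subseteq(U\otimes V)\otimes V$ directly by $(\mathrm{W}^2)$, $(\mathrm{W}^1)$ and $(\odot\mathrm{C})$, then passing to $\otimes$ by monotonicity of the closure operator and closedness of the right-hand sides --- stays entirely inside $\mathrm{T}$-derivations and is the standard repair (it is how Buszkowski treats structural-rule extensions of $\mathsf{DFNL}$). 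Two further points in your favour: you state the contraction inequation in the correct form $U\otimes V\subseteq(U\otimes V)\otimes V$, whereas the paper's displayed $U\otimes(V\otimes V)\subseteq U\otimes V$ is evidently a typo (the paper's own reduction immediately afterwards uses your form); and your remark that $(\odot\mathrm{C})$ is applicable because the contracted component $\Gamma$, being a formula structure, is nonempty checks exactly the side condition of that rule. In short, the paper's formula-level shortcut buys brevity, while your structure-level argument is the one that actually closes the proof.
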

\begin{lemma}
Assume $\Phi\not\vdash_{\mathsf{L_{RBL}}}\Gamma \Rightarrow A$. There exist a finite $\mathbf{G} \in \mathbb{RBA}$ and a valuation $\sigma$ such that all sequents in $\Phi$ are true in ($\mathbf{G}$, $\sigma$) but $\Gamma \Rightarrow A$ is not.
\end{lemma}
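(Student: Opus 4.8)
The plan is to use the finite residuated basic algebra produced by the preceding theorem as the countermodel, equipped with the canonical valuation, and to run an algebraic completeness argument of Buszkowski's type. First I would fix $\mathrm{T}$ to be the closure under $\wedge$ and $\vee$ (finitely many elements up to $\mathrm{T}$-equivalence) of the set of all subformulae of the formulae occurring in $\Gamma\Rightarrow A$ and in $\Phi$, together with $\bot$ and $\top$. Since $\mathrm{T}$ is subformula-closed, closed under $\wedge,\vee$, and contains $\bot,\top$, the preceding theorem gives that $\mathbf{G}:=\mathbf{C_T}(\mathbf{G}(\mathrm{T^*}))$ is a finite residuated basic algebra, so $\mathbf{G}\in\mathbb{RBA}$. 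I then define the valuation $\sigma$ by $\sigma(p)=[p]$ for each propositional variable $p$ (and arbitrarily for variables not in $\mathrm{T}$).

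Next I would establish the \emph{Truth Lemma}: $\sigma(A)=[A]$ for every $A\in\mathrm{T}$, by induction on $A$. The atomic case is the definition of $\sigma$; the cases for $\cdot$, $\imp$, $\leftarrow$, $\wedge$, $\vee$ are immediate from equations (I) and (II), using that the relevant subformulae and their combinations remain in $\mathrm{T}$; the case $\top$ uses $[\top]=\mathsf{G}=\top$ (every structure proves $\top$); and the case $\bot$ uses $[\bot]=C(\emptyset)=\bot_C$, which holds because $[\bot]$ is the least $C$-closed set: $\Phi\vdash_{\mathsf{L_{RBL}}}\Delta\Rightarrow_{\mathrm{T}}\bot$ yields $\Phi\vdash_{\mathsf{L_{RBL}}}\Delta\Rightarrow_{\mathrm{T}}B$ for every $B\in\mathrm{T}$ by one cut against the axiom $\bot\Rightarrow B$. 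With the Truth Lemma in hand, every $B\Rightarrow C\in\Phi$ is true in $(\mathbf{G},\sigma)$: it belongs to $\Phi$, so $\Phi\vdash_{\mathsf{L_{RBL}}}B\Rightarrow_{\mathrm{T}}C$, and cut gives $[B]\subseteq[C]$, i.e. $\sigma(B)\subseteq\sigma(C)$.

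The core step is the \emph{Membership Lemma}: $\Gamma\in\sigma(\Gamma)$ for every formula structure $\Gamma\in\mathrm{T^*}$, by induction on $\Gamma$. For a single formula, $A\in[A]=\sigma(A)$ by $(\mathrm{Id})$. For $\Gamma=\Gamma_1\odot\Gamma_2$, the induction hypotheses give $\Gamma_i\in\sigma(\Gamma_i)$, so $\Gamma_1\odot\Gamma_2$ lies in the complex product $\sigma(\Gamma_1)\odot\sigma(\Gamma_2)$ and hence, by (C1) and the definitions of $\otimes$ and $\sigma$, in $C(\sigma(\Gamma_1)\odot\sigma(\Gamma_2))=\sigma(\Gamma_1)\otimes\sigma(\Gamma_2)=\sigma(\Gamma)$. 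For $\Gamma=\Gamma_1\owedge\Gamma_2$, I would use weakening: for any basic closed set $[\Theta[-],B]\supseteq\sigma(\Gamma_1)$ we have $\Gamma_1\in[\Theta[-],B]$, and $(\mathrm{W}^2)$ turns $\Phi\vdash_{\mathsf{L_{RBL}}}\Theta[\Gamma_1]\Rightarrow_{\mathrm{T}}B$ into $\Phi\vdash_{\mathsf{L_{RBL}}}\Theta[\Gamma_1\owedge\Gamma_2]\Rightarrow_{\mathrm{T}}B$, so $\Gamma_1\owedge\Gamma_2\in[\Theta[-],B]$; intersecting over all such sets yields $\Gamma_1\owedge\Gamma_2\in\sigma(\Gamma_1)$, and symmetrically, via $(\mathrm{W}^1)$, $\Gamma_1\owedge\Gamma_2\in\sigma(\Gamma_2)$, whence $\Gamma_1\owedge\Gamma_2\in\sigma(\Gamma_1)\cap\sigma(\Gamma_2)=\sigma(\Gamma)$. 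To finish, suppose $\Gamma\Rightarrow A$ were true in $(\mathbf{G},\sigma)$; then $\sigma(\Gamma)\subseteq\sigma(A)=[A]$, so $\Gamma\in\sigma(\Gamma)\subseteq[A]$, giving $\Phi\vdash_{\mathsf{L_{RBL}}}\Gamma\Rightarrow_{\mathrm{T}}A$ and a fortiori $\Phi\vdash_{\mathsf{L_{RBL}}}\Gamma\Rightarrow A$, contradicting the hypothesis. Hence $\Gamma\Rightarrow A$ is not true in $(\mathbf{G},\sigma)$, as required.

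I expect the main obstacle to be the Membership Lemma, specifically its $\owedge$-case: whereas the $\odot$-case follows purely from the closure axioms (C1)--(C4), closedness alone does not propagate $\Gamma_1$ to $\Gamma_1\owedge\Gamma_2$, so one must invoke the structural weakening rules $(\mathrm{W}^1),(\mathrm{W}^2)$, which are the proof-theoretic reflection of the integrality axioms $(\mathrm{w}_1),(\mathrm{w}_2)$ built into residuated basic algebras. A secondary point requiring care is the identification $[\bot]=\bot_C$, which is not covered by the subformula induction and must be checked separately from the $(\bot)$ axiom, since $\bot$ is the lattice bottom rather than an ordinary generator.
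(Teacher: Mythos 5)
Your proposal is correct and takes essentially the same route as the paper: the same syntactic algebra $\mathbf{C_T}(\mathbf{G}(\mathrm{T^*}))$ with the canonical valuation $\sigma(p)=[p]$, the truth lemma $\sigma(A)=[A]$ via equations (I)--(II), the membership fact $\Gamma\in\sigma(\Gamma)$, and the concluding contradiction with $\Phi\vdash_{\mathsf{L_{RBL}}}\Gamma\Rightarrow_{\mathrm{T}}A$. If anything, you are more careful than the paper's own terse proof, which delegates the membership lemma (your $\owedge$-case via $(\mathrm{W}^1),(\mathrm{W}^2)$) and the verification that the sequents of $\Phi$ hold to the cited Buszkowski-style construction, and does not explicitly require $\mathrm{T}$ to contain the subformulae of $\Phi$ --- a detail your version rightly adds, since it is needed for $\Phi$ to consist of true $\mathrm{T}$-sequents in the countermodel.
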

\begin{proof}
Suppose that $\mathrm{T}$ is the set of all formulae appearing in $\Gamma \Rightarrow A$, containing $\bot$, $\top$ and closed under $\wedge$ and $\vee$. Let $\mathbf{G}=\mathbf{C_T}(\mathbf{G}(\mathsf{T^*}))$ and $\sigma(p)=[p]$ for $p\in T$. By (I)-(II), we get $[A]=\sigma(A)$, for $A\in T$. Assume that $\Gamma \Rightarrow A$ is true in ($\mathbf{C_T}(\mathbf{G}(\mathrm{T^*}))$, $\sigma$). Then $\sigma(\Gamma)\subseteq\sigma(A)$. Since $\Gamma\in \sigma(\Gamma)$, we get $\Gamma\in\sigma(A)=[A]$. Hence $\vdash_{\mathrm{L_{RBL}}} \Gamma \Rightarrow_{T} A$, which yields a contradiction.
\end{proof}

\begin{theorem}
$\mathsf{L_{RBL}}$ has SFMP.
\end{theorem}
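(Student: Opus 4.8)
The plan is to read the theorem off directly from the preceding refutation Lemma, since the strong finite model property is, by the definition recalled at the start of this section, precisely the assertion that every sequent $\Gamma \Rightarrow A$ not derivable from $\Phi$ in $\mathsf{L_{RBL}}$ can be refuted in a finite residuated basic algebra in which all sequents of $\Phi$ hold. I would therefore begin by unwinding the definition of SFMP and matching it clause by clause against the conclusion of that Lemma: finiteness, membership in $\mathbb{RBA}$, satisfaction of $\Phi$, and failure of $\Gamma \Rightarrow A$. The bulk of the work has already been discharged, so the proof amounts to assembling these pieces.

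Concretely, given $\Phi \not\vdash_{\mathsf{L_{RBL}}} \Gamma \Rightarrow A$, I would take $\mathrm{T}$ to be the finite set of all subformulae of the formulae occurring in $\Gamma \Rightarrow A$ and in $\Phi$, enlarged by $\bot$ and $\top$ and closed under $\wedge$ and $\vee$, and form the syntactic model $\mathbf{G} = \mathbf{C_T}(\mathbf{G}(\mathrm{T^*}))$. By the earlier theorem establishing that $\mathbf{C_T}(\mathbf{G}(\mathrm{T^*}))$ is a finite residuated basic algebra, this $\mathbf{G}$ is finite and lies in $\mathbb{RBA}$; the interpolation Lemma is exactly what guarantees the equations (I) and (II), hence that the canonical valuation $\sigma(p)=[p]$ satisfies $\sigma(A)=[A]$ for every $A\in\mathrm{T}$. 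The preceding Lemma then delivers that $\Gamma \Rightarrow A$ fails in $(\mathbf{G},\sigma)$, since $\Gamma\in\sigma(\Gamma)$ while $\Gamma\notin[A]=\sigma(A)$.

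The one step deserving care, and the place where I expect the only subtlety to lie, is the satisfaction of $\Phi$ in $\mathbf{G}$. For a simple sequent $A \Rightarrow B \in \Phi$ with $A,B\in\mathrm{T}$, I would verify $[A]\subseteq[B]$ directly: if $\Pi\in[A]$ then $\Phi\vdash_{\mathsf{L_{RBL}}}\Pi\Rightarrow_{\mathrm{T}}A$, and cutting against $A \Rightarrow B$, which is available because $A \Rightarrow B \in \Phi$ and is a $\mathrm{T}$-sequent, yields $\Phi\vdash_{\mathsf{L_{RBL}}}\Pi\Rightarrow_{\mathrm{T}}B$, i.e. $\Pi\in[B]$. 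Under the canonical valuation this is precisely $\sigma(A)\leq\sigma(B)$, so every sequent of $\Phi$ holds in $(\mathbf{G},\sigma)$. This cut step is the sole point where $\Phi$ actually enters, and it is why $\mathrm{T}$ must be taken as the subformula closure of both $\Gamma \Rightarrow A$ and $\Phi$: large enough that the cut remains inside $\mathrm{T}$-sequents, yet still finite so that $C_{\mathrm{T}}(\mathrm{T^*})$ stays finite up to $\mathrm{T}$-equivalence. Once this is confirmed, the theorem follows immediately from the refutation Lemma.
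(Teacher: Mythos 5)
Your proof is correct and follows the paper's own route: the theorem is read off directly from the preceding refutation lemma together with the theorem that $\mathbf{C_T}(\mathbf{G}(\mathrm{T^*}))$ is a finite residuated basic algebra. In fact you supply a detail the paper leaves implicit --- taking $\mathrm{T}$ to contain the subformulae of $\Phi$ as well as of $\Gamma \Rightarrow A$, and verifying via (Cut) that every sequent of $\Phi$ holds under the canonical valuation --- which is exactly how that step should be discharged.
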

\begin{theorem}
The logic $\mathsf{RBL}$ is decidable.
\end{theorem}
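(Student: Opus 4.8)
The plan is to derive decidability directly from the finite model property just established, via the standard two-pronged argument available for any logic that is simultaneously recursively axiomatised and has the FMP. First I would record that membership in $\mathsf{RBL}$ reduces to provability in $\mathsf{L_{RBL}}$: a formula $A$ lies in $\mathsf{RBL}$ exactly when it is valid in every residuated basic algebra, and by the algebraic completeness of $\mathsf{L_{RBL}}$ with respect to $\mathbb{RBA}$ (noted above to follow from FMP), together with Theorem~\ref{theorem:conservative1} for $\mathcal{L}_{\mathrm{BPL}}$-formulae, this is the same as provability of the sequent $\top\Rightarrow A$. Hence it suffices to decide, for an arbitrary sequent $\Gamma\Rightarrow A$, whether $\vdash_{\mathsf{L_{RBL}}}\Gamma\Rightarrow A$.

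Next I would combine soundness of $\mathsf{L_{RBL}}$ over $\mathbb{RBA}$ with the refutation lemma (the SFMP specialised to $\Phi=\varnothing$) to obtain the key biconditional: $\vdash_{\mathsf{L_{RBL}}}\Gamma\Rightarrow A$ if and only if $\Gamma\Rightarrow A$ is true in every finite residuated basic algebra under every valuation. The decision procedure then runs two searches in parallel. On one side, derivations in $\mathsf{L_{RBL}}$ are enumerated, so the provable sequents form a recursively enumerable set. On the other side, finite members of $\mathbb{RBA}$ together with valuations of the finitely many variables of $\Gamma\Rightarrow A$ are enumerated; since truth of a sequent in a \emph{finite} algebra is a finite computation, the sequents refutable in some finite residuated basic algebra also form a recursively enumerable set. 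By the biconditional these two sets are complementary, so each is recursive and $\mathsf{RBL}$ is decidable.

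More concretely, I would prefer to bypass the blind parallel search and use the explicit model from the proof of the refutation lemma. Given $\Gamma\Rightarrow A$, let $\mathrm{T}$ be the set of its subformulae closed under $\wedge$ and $\vee$ (finite up to $\mathrm{T}$-equivalence), form the finite residuated basic algebra $\mathbf{C_T}(\mathbf{G}(\mathrm{T^*}))$ with $\sigma(p)=[p]$, and test whether $\sigma(\Gamma)\subseteq\sigma(A)$; by that proof this single, effectively constructible model refutes $\Gamma\Rightarrow A$ precisely when the sequent is unprovable, which already yields a terminating decision procedure.

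The point I would stress is where the genuine work sits. One might hope that cut elimination and the subformula property alone yield a terminating backward proof search, and hence decidability outright; but this fails here because the structural rules $(\mathrm{W}^1)$, $(\mathrm{W}^2)$, $(\owedge\mathrm{C})$ and $(\odot\mathrm{C})$ are not size-decreasing when read upwards — contraction in particular can enlarge the antecedent structure — so proof search need not terminate and the subformula property does not bound the search space. The finite model property is exactly what replaces this unbounded syntactic search by a bounded semantic one, and the substantive ingredient is therefore the interpolation Lemma~\ref{lemma:int}, which forces $\mathbf{C_T}(\mathbf{G}(\mathrm{T^*}))$ to be finite (every element is some $[A]$ with $A\in\mathrm{T}$) with operations computable from equations (I)--(II). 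Relative to that, the present theorem is a routine consequence, and the only care needed is to confirm the effectivity of constructing the model and evaluating $\sigma$ on it.
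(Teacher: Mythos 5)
Your first argument --- the parallel enumeration --- is correct and is essentially the paper's own (implicit) route: the paper states decidability as an immediate consequence of the preceding refutation lemma, and the intended justification is exactly the standard Harrop-style one you give. Provable sequents are recursively enumerable by enumerating derivations; refutable sequents are recursively enumerable because $\mathbb{RBA}$ is a finitely axiomatized class, so checking that a given finite algebra is a residuated basic algebra, and evaluating a sequent in it under a given valuation, are finite computations; soundness plus the refutation lemma (with $\Phi=\varnothing$) makes the two sets complementary. Your reduction of membership in $\mathsf{RBL}$ to provability of $\top\Rightarrow A$ via algebraic completeness is also fine (the appeal to the conservativity theorem for $\mathcal{L}_{\mathrm{BPL}}$-formulae is unnecessary but harmless).

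However, the variant you say you would \emph{prefer} --- constructing the single algebra $\mathbf{C_T}(\mathbf{G}(\mathrm{T^*}))$ and testing $\sigma(\Gamma)\subseteq\sigma(A)$ --- has a genuine gap, which you flag (``confirm the effectivity'') but which cannot be discharged in the naive way. The algebra is finite, but its presentation is not effective: its elements are the sets $[A]$, and both deciding when $[A]=[B]$ and testing membership $\Delta\in[A]$ amount to deciding the $\mathrm{T}$-restricted derivability relation $\Phi\vdash_{\mathsf{L_{RBL}}}\Delta\Rightarrow_{\mathrm{T}}A$. That relation is only known to be recursively enumerable, for precisely the reason you yourself identify: the rules $(\owedge\mathrm{C})$ and $(\odot\mathrm{C})$ read upward enlarge structures, so search over $\mathrm{T}$-derivations need not terminate. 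Worse, by the refutation lemma truth of $\Gamma\Rightarrow A$ in this particular model is \emph{equivalent} to derivability of $\Gamma\Rightarrow A$, so ``evaluate $\sigma$ and test inclusion'' is circular as a decision procedure. The finite countermodel is a non-constructive existence statement whose role is to make the co-set recursively enumerable; the terminating procedure really is the parallel search, and your first argument should be kept as the proof, not bypassed.
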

If a class of algebras $\mb{K}$ is closed under (finite) products, then SFMP for $\mb{K}$ is equivalent to FEP for $\mb{K}$, i.e., every finite
partial subalgebra of an algebra from $\mb{K}$ is embeddable into a finite algebra from $\mb{K}$ (\cite{GO07}). Then it follows immediately that $\mathbb{RBA}$ has FEP.

\section{Embedding of $\mathsf{Int}$ into $\mathsf{BPL}$}
An $\mc{L}_{\mathrm{Int}}$-formula $A$ is built from propositional letters and $\bot$ using $\wedge,\vee$ and the intuitionistic implication $\imp$.
An $\mc{L}_{\mathrm{Int}}$-formula structure, which is a finite (possibly empty) sequence of formulae (in fact, the order of formulae do not matter), is defined as follows: (i) each Int-formula is a Int-formula structure; (ii) if $\Gamma$ and $\Delta$ are Int-formula structures, then $(\Gamma,\Delta)$ is a Int-formula structure. An $\mc{L}_{\mathrm{Int}}$-sequent is of the form $\Gamma\Imp A$ where $\Gamma$ is a $\mc{L}_{\mathrm{Int}}$-formula structure and $A$ is an $\mc{L}_{\mathrm{Int}}$-formula. The sequent calculus $\mathsf{G4ip}$ for intuitionistic logic can be found in \cite{TS00}:
\begin{displaymath}
(\mathrm{Id}) ~ p,\Gamma\Rightarrow p~(p~\mathrm{is~atomic})\quad(\bot)~\bot,\Gamma\Imp A
\end{displaymath}
\begin{displaymath}
(\mathrm{\wedge L})~ \frac{A, B,\Gamma \Rightarrow C}{A\wedge B,\Gamma C} \quad
(\mathrm{\wedge R})~ \frac{\Gamma \Imp A\quad \Delta\Imp B}{\Gamma,\Delta \Rightarrow A\wedge B}\quad
(\mathrm{\imp R})~\frac{A, \Gamma \Rightarrow B}{\Gamma \Rightarrow A \imp B}
\end{displaymath}
\begin{displaymath}
(\mathrm{\vee L})~\frac{A, \Gamma\Rightarrow C\quad B, \Delta\Rightarrow C}{A\vee B, \Gamma,\Delta \Rightarrow C}\quad(\mathrm{\vee R)}~ \frac{\Gamma \Rightarrow A_i}{\Gamma\Rightarrow A_1\vee A_2}\quad(i=1,2)
\end{displaymath}
\begin{displaymath}
(\mathrm{\imp L_1})~\frac{p, B, \Gamma \Rightarrow E}{p\imp B,p,\Gamma \Rightarrow E}~(p~\mathrm{is~atomic})\quad
(\mathrm{\imp L_2})~\frac{C\imp (D\imp B),\Gamma \Rightarrow E}{C\wedge D\imp B,\Gamma \Rightarrow E}
\end{displaymath}
\begin{displaymath}
(\mathrm{\imp L_3})~\frac{C\imp B, D\imp B,\Gamma\Imp E}{C\vee D\imp B,\Gamma\Rightarrow E} \quad
(\mathrm{\imp L_4})~\frac{D\imp B, C, \Gamma\Imp D\quad B,\Gamma\Imp E}{(C\imp D)\imp B,\Gamma \Rightarrow E}
\end{displaymath}
\begin{definition}[\cite{TS00}]
The {\em weight} of an $\mc{L}_{\mathrm{Int}}$-formula $A$ is a natural number defined recursively as follows:
\begin{itemize}
\item $w(p) = w(\bot) = 2$ for each propositional letter $p$.
\item $w(A\wedge B) = w(A)(1+w(B))$.
\item $w(A\vee B) = 1+w(A)+w(B)$.
\item $w(A\imp B)=1+w(A)w(B)$.
\end{itemize}
For each $\mc{L}_{\mathrm{Int}}$-sequent $\Gamma\Imp A$, we put 
\begin{center}
$w(\Gamma\Imp A) = \sum\{w(B):B\in \Gamma$ or $B=A\}$.
\end{center}
\end{definition}
Observe that for each rule of $\mathsf{G4ip}$, the weight of each premises is lower than that of the conclusion.
This fact is used in our proof of the embedding theorem. Now let us turn the notion of positive (negative) Int-formula in an $\mc{L}_{\mathrm{Int}}$-sequent.

\begin{definition}
The {\em positiveness} ({\em negativeness}) of an Int-formula $A$ appeared in a $\mc{L}_{\mathrm{Int}}$-sequent $\Gamma\Imp C$ is defined recursively by the following rules:
\begin{itemize}
\item $A=C$ is positive, and $A\in \Gamma$ is negative.
\item if $A=A_1\wedge A_2$ is positive (negative), then both $A_1$ and $A_2$ are positive (negative).
\item  if $A=A_1\vee A_2$ is positive (negative), then both $A_1$ and $A_2$ are positive (negative).
\item  if $A=A_1\imp A_2$ is positive (negative), then $A_1$ is negative (positive) and $A_2$ is positive (negative).
\end{itemize}
\end{definition}

\begin{example}
By $v(A) = +$ and $v(A)=-$ we mean that the formula $A$ in a sequent is positive and negative respectively. Consider the sequent $A,B\imp C\Imp E\imp F$. Then $v(A)=-$, $v(B\imp C)=-$ and so $v(B)=+$ and $v(C)=-$. In the consequent, $v(E\imp F)=+$ and so $v(E)=-$ and $v(F)=+$.
\end{example}

The positiveness or negativeness of any subformula in a sequent can be calculated. For any derivation, the positiveness or negativeness of each subformula cannot be changed by applications of rules.

For any $\mc{L}_{\mathrm{Int}}$-formula $A$ and $n>0$, let $A^{\# n}$ be the formula obtained from $A$ by replacing all occurrences of its positive subformula $B$ by $\top^n\imp B$, where $\top^n\imp B$ is defined by induction on $n > 0$ as follows: $\top^1\imp B := \top \imp B$ and $\top^{n+1}\imp B:=\top\imp(\top^n\imp B)$.

\begin{example}
 Let $A=A_1\wedge A_2$ and $A$ is positive. $A^{\#n} =(A_1\wedge A_2)^{\#n}= \top^n\imp (A_1^{\#n} \wedge A_2^{\#n})$. If $A$ is negative then $A^{\#n} = A_1^{\#n} \wedge A_2^{\#n}$. Let $A=A_1\imp (A_2\imp A_3)$ and $A$ is positive. Then $A^{\#n} = \top^n\imp (A_1^{\#n}\imp(\top^n\imp( A_2^{\#n}\imp A_3^{\#n}))$.
\end{example}

\begin{definition}
We define a map $(.)^{\#n}$ from $\mc{L}_{\mathrm{Int}}$-formula structures to $\mc{L}_{\mathrm{RBL}}$-formula structures as follows:
\begin{align*}
A^{\#n} &= A^{\#n}\\
(\Gamma,\Delta)^{\#n} &= \Gamma^{\#n}\owedge \Delta^{\#n}
\end{align*}
For each $\mc{L}_{\mathrm{Int}}$-sequent $\Gamma\Imp A$, we define
\[
(\Gamma\Imp A)^\#=
\begin{cases}
\Gamma^{\#w(\Gamma\Imp A)}\Imp A^{\#w(\Gamma\Imp A)},~\mathrm{if}~\Gamma~\mathrm{is~nonempty}.\\
\top\Imp A^{\#w(\Imp A)}, ~\mathrm{otherwise}.
\end{cases}
\]
We define the translation $Tr(.):\mc{L}_{\mathrm{Int}}\imp\mc{L}_{\mathrm{BPL}}$ by putting:
\begin{center}
$Tr(A) = $ the succedent of $(\Imp A)^{\#w(\Imp A)}$.
\end{center}
\end{definition}
\begin{proposition}\label{prop:1}
The following $\mc{L}_{\mathrm{RBL}}$-sequents  are derivable in $\mathsf{L_{RBL}}$:
\begin{itemize}
\item[(1)] $A\cdot (B\wedge C)\Imp (A\cdot B)\cdot C$
\item[(2)] $A\cdot (B\cdot C)\Imp(A\cdot B )\cdot C$
\item[(3)] $A\cdot(B\wedge C)\Imp A\cdot B\wedge A\cdot C$.
\item[(4)] $(B\wedge C)\cdot A\Imp B\cdot A\wedge C\cdot A$.
\item[(5)] $A\cdot B\Imp A\wedge B$.
\item[(6)] $(B\vee C)\imp A\Leftrightarrow (B\imp A)\wedge (C\imp A)$.
\item[(7)] $A\imp (B\wedge C)\Leftrightarrow (A\imp B)\wedge (A\imp C)$.
\item[(8)] $A\cdot(B\vee C)\Leftrightarrow (A\cdot B) \vee (A\cdot C)$.
\item[(9)] $(\top^n\imp (C\wedge A))\imp B\Imp C\imp (A\imp B)$
\item[(10)] $(\top^n\imp(C\vee A))\imp B\Imp (C\imp B)\wedge (A\imp B)$
\item[(11)] $(\top^n\imp(C\imp A))\imp B\Imp (A\imp B)\wedge((C\imp A)\imp B)$
\end{itemize}
\end{proposition}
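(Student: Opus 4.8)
My plan is to prove every one of the eleven sequents by showing that the corresponding inequality is valid in each residuated basic algebra, and then to invoke the completeness of $\mathsf{L_{RBL}}$ with respect to $\mathbb{RBA}$, which is available from the SFMP established in Section~4. This turns each item into a short inequational calculation from the defining axioms, and it is uniform across the list. Before starting I would record four facts used throughout: derived weakening $A\cdot B\le A$ and $A\cdot B\le B$ (from $\mathrm{w_1},\mathrm{w_2}$ and monotonicity of $\cdot$, since $B\le\top$ gives $A\cdot B\le A\cdot\top\le A$, and dually); the one-sided associativity of item~(2), namely $A\cdot(B\cdot C)\le(A\cdot B)\cdot C$, obtained from $\mathrm{c_r}$ by $A\cdot(B\cdot C)\le(A\cdot(B\cdot C))\cdot(B\cdot C)\le(A\cdot B)\cdot C$; the absorption $X\le\top^n\imp X$ for all $n\ge 1$ (iterate $\mathrm{w_2}$, i.e.\ $\top\cdot X\le X$, using monotonicity of $\imp$ in its second argument); and the residual cancellation $D\cdot(D\imp B)\le B$.

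With these in hand the routine items fall out quickly. Item~(5) is $A\cdot B\le A\wedge B$ from the two weakenings; (3) and (4) combine monotonicity of $\cdot$ with $B\wedge C\le B,C$; the two directions of (8) are the distributivity of $\cdot$ over $\vee$, one direction by monotonicity and the other using $\vee$-elimination inside the product; and (6), (7) are the standard residuation laws of $\imp$ and $\leftarrow$ over $\vee$ and $\wedge$. Item~(1) is handled exactly like (2), with the inner $B\owedge C$ (additive) in place of the inner product and the weakenings $B\wedge C\le B,C$ supplying the two shrunken copies.

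The substantive content is (9)--(11). For (9) I would apply residuation twice to reduce $(\top^n\imp(C\wedge A))\imp B\le C\imp(A\imp B)$ to $A\cdot(C\cdot X)\le B$, where $X=(\top^n\imp(C\wedge A))\imp B$. Writing $p:=A\cdot(C\cdot X)$, weakening gives $p\le A$ and $p\le C\cdot X\le C$, hence $p\le C\wedge A\le\top^n\imp(C\wedge A)$ by absorption; and $\mathrm{c_r}$ together with item~(2) gives $p\le p\cdot X$ via $A\cdot(C\cdot X)\le A\cdot((C\cdot X)\cdot X)\le(A\cdot(C\cdot X))\cdot X$. Therefore $p\le p\cdot X\le(\top^n\imp(C\wedge A))\cdot X\le B$, the last step being residual cancellation. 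Items~(10) and (11) follow the same template after splitting the $\wedge$ on the right-hand side, and they are even lighter since the relevant product is already a single product, so $\mathrm{c_r}$ applies directly and item~(2) is not needed; the only extra move in (11) is the observation $C\cdot(A\cdot X)\le A\cdot X\le A$, which lands the guard $C\imp A$ inside the $\top^n$-prefix.

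I expect the genuine obstacle to lie not in the algebra but in any attempt to derive (9)--(11) \emph{directly} in $\mathsf{L_{RBL}}$, which is precisely what the completeness detour avoids. A direct derivation wants to fire $(\imp\mathrm{L})$ on the outer implication of $(\top^n\imp(\cdots))\imp B$, and this demands a structure proving the guard $\top^n\imp(\cdots)$ immediately to the left of it; but after stripping the right-hand side by $(\imp\mathrm{R})$ the formulae $A$ and $C$ sit in the multiplicative context $A\odot(C\odot X)$ rather than additively beside $X$. Manufacturing the required contracted copy is the syntactic shadow of the inequality $p\le p\cdot X$: it uses $(\odot\mathrm{C})$ subject to its side condition that the left factor $\Lambda$ be nonempty, together with item~(2), and the context bookkeeping around the nested $\odot$ is the delicate point. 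Once the copy is in place each branch closes by $(\mathrm{Id})$, the weakening rules, and the residual rules, so the algebraic route is the cleaner one to present while the syntactic route records that the same cut-free derivations exist.
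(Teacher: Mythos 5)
Your proof is correct, but it takes a genuinely different route from the paper's. The paper establishes all eleven sequents \emph{syntactically}, by explicit derivations inside $\mathsf{L_{RBL}}$: items (1)--(2) come from cutting $(\cdot\mathrm{R})$-inferences against the restricted-contraction sequent $A\cdot(B\wedge C)\Imp(A\cdot(B\wedge C))\cdot(B\wedge C)$, and for (9)--(11) the derivations fire $(\imp\mathrm{L})$ \emph{early}: e.g.\ from $A\odot C\Imp(\top^n\imp C)\wedge(\top^n\imp A)$ (obtained by weakening and $(\imp\mathrm{R})$) and $B\Imp B$ one gets $(A\odot C)\odot\bigl(((\top^n\imp C)\wedge(\top^n\imp A))\imp B\bigr)\Imp B$, then reassociates via items (1)/(2) and (Cut), converts the guard using the distribution laws (6)/(7), and finishes with $(\imp\mathrm{R})$. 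In particular, the obstacle you anticipate in your closing paragraph (having to manufacture a contracted copy with $(\odot\mathrm{C})$ after stripping the succedent) never arises in the paper's derivations: since $(\imp\mathrm{L})$ places the auxiliary structure $\Delta$ immediately to the left of the implication, no top-level contraction is needed, and all uses of $(\mathrm{c_r})$ stay encapsulated in items (1)--(2). Your route instead verifies the corresponding inequalities in an arbitrary residuated basic algebra and appeals to completeness of $\mathsf{L_{RBL}}$ w.r.t.\ $\mathbb{RBA}$, which indeed follows from the SFMP lemma of Section~4 with $\Phi=\emptyset$; since nothing in Sections~3--4 depends on Proposition~\ref{prop:1}, this detour is not circular, and your algebraic computations (derived weakening, one-sided associativity from $(\mathrm{c_r})$, absorption $X\le\top^n\imp X$, residual cancellation, the key step $p\le p\cdot X$, and $A\le C\imp A$ in (11)) are all sound. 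The trade-off is structural: your argument is shorter and uniform across the hard items, but it makes Proposition~\ref{prop:1} --- and with it the embedding of $\mathsf{Int}$ into $\mathsf{BPL}$ and the PSPACE-hardness corollary --- rest on the interpolation lemma and the finite syntactic model construction, whereas the paper's direct derivations keep Section~5 purely proof-theoretic and independent of that machinery, needing only the calculus itself and the conservativity result from part I.
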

\begin{proof}
The items (3)-(8) are checked regularly. We check only (1), (2), (9), (10) and (11). Let us consider (1). From $A\Imp A$ and $B\wedge C\Imp B$, by  $(\mathrm{\cdot R)}$, we get $A\cdot (B\wedge C)\Imp A\cdot B$. Then apply $(\mathrm{\cdot R)}$ to the resulting sequent and $B\wedge C\Imp C$, we get $(A\cdot(B\wedge C))\cdot (B\wedge C)\Imp (A\cdot B)\cdot C$. Since $A\cdot(B\wedge C)\Imp (A\cdot(B\wedge C))\cdot (B\wedge C)$ is an instance of axiom, by (Cut), we get $A\cdot(B\wedge C)\Imp (A\cdot B)\cdot C$.

Let us consider (2). By ($\mathrm{W^1}$), we obtain $B\cdot C \Imp B$ and $B\cdot C\Imp C$. By ($\mathrm{\wedge R}$), we get $B\cdot C\Imp B\wedge C$. By applying ($\mathrm{\cdot R}$) to the resulting sequent and $A\Imp A$, we obtain $A\cdot(B\cdot C)\Imp A\cdot (B\wedge C)$. By (1) and (Cut), we get $A\cdot(B\cdot C)\Imp (A\cdot B)\cdot C$.

Let us consider (8). By (Id), ($\mathrm{W^1}$) and ($\imp$R), we get $A\Imp \top^n\imp A$ and $C\Imp \top^n\imp C$. Then by ($\mathrm{W^1}$), ($\mathrm{W^2}$) and ($\mathrm{\wedge R}$), we obtain $A\odot C\Imp (\top^n\imp C)\wedge (\top^n\imp A)$. By applying ($\mathrm{\imp L}$) to the resulting sequent and $B\Imp B$, we get $(A\odot C)\odot(((\top^n\imp C)\wedge (\top^n\imp A))\imp B)\Imp B$. By (2) and (Cut), we obtain $A\odot (C\odot(((\top^n\imp C)\wedge (\top^n\imp A))\imp B))\Imp B$. By ($\mathrm{\imp R}$), we get $((\top^n\imp C)\wedge (\top^n\imp A))\imp B \Imp C\imp(A\imp B)$. By (7), we obtain $((\top^n\imp C)\wedge (\top^n\imp A))\Leftrightarrow \top^n\imp(C\wedge A)$. Hence by ($\mathrm{\imp L}$) and ($\mathrm{\imp R}$), $(\top^n\imp(C\wedge A))\imp B \Imp ((\top^n\imp C)\wedge (\top^n\imp A))\imp B $. By apply (Cut) to this sequent and $((\top^n\imp C)\wedge (\top^n\imp A))\imp B \Imp C\imp(A\imp B)$, we get $(\top^n\imp(C\wedge A))\imp B\Imp  C\imp(A\imp B)$.

Let us consider (9). By (Id), ($\mathrm{W^1}$) and ($\mathrm{\imp R}$), we obtain $C\vee A \Imp \top^n \imp (C\vee A)$. By apply $\mathrm{(\imp L)}$ to this sequent and $B\imp B$, we get $((C\vee A)\odot ((\top^n\imp (C\vee A))\imp B)\Imp B$. By ($\mathrm{\imp R}$), we obtain  $(\top^n\imp (C\vee A))\imp B\Imp( C\vee A)\imp B$. By (5), we have $( C\vee A)\imp B\Leftrightarrow (C\imp B)\wedge (A\imp B)$. By (Cut), we get $(\top^n\imp (C\vee A))\imp B\Imp (C\imp B)\wedge (A\imp B)$.

Let us consider (10). By ($\mathrm{W^1}$) and ($\mathrm{\imp R}$), we get $A\Imp \top^n\imp(C\imp A)$. By applying ($\mathrm{\imp L}$) to this sequent and $B\imp B$, we get $A\odot (\top^n\imp(C\imp A))\imp B\Imp B$. Hence by ($\imp R$), we obtain $(\top^n\imp(C\imp A))\imp B\Imp A\imp B$. By similar argument, we get $(\top^n\imp(C\imp A))\imp B\Imp (C\imp A)\imp B$. Hence by ($\mathrm{\wedge R}$), we obtain $(\top^n\imp(C\imp A))\imp B\Imp  (A\imp B) \wedge((C\imp A)\imp B)$.
\end{proof}

Let $\mathsf{L_{RBL}'}$ be the sequent calculus obtained from $\mathsf{L_{RBL}}$ by replacing the axiom $(\mathrm{Id})~A\Imp A$
by the axiom $(\mathrm{Id}')~p\Imp p$ ($p$ is atomic).

\begin{lemma}\label{lemma:rblequiv}
For any $\mc{L}_{\mathrm{RBL}}$-sequent $\Gamma\Imp A$, $\vdash_{\mathsf{L_{RBL}}}\Gamma\Imp A$ iff $\vdash_{\mathsf{L_{RBL}'}}\Gamma\Imp A$.
\end{lemma}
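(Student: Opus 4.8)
The plan is to prove the two directions separately, the right-to-left direction being immediate and the left-to-right direction resting on a single auxiliary observation. First I would note that $\mathsf{L_{RBL}'}$ is literally a subsystem of $\mathsf{L_{RBL}}$: its atomic axiom $(\mathrm{Id}')~p\Imp p$ is a special instance of $(\mathrm{Id})~A\Imp A$, and every other axiom and rule is shared between the two calculi. Hence any $\mathsf{L_{RBL}'}$-derivation is already an $\mathsf{L_{RBL}}$-derivation, which settles the direction $\vdash_{\mathsf{L_{RBL}'}}\Gamma\Imp A \Rightarrow \vdash_{\mathsf{L_{RBL}}}\Gamma\Imp A$ with no work.

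For the converse, the key step is to establish the \emph{generalized identity}: $\vdash_{\mathsf{L_{RBL}'}}A\Imp A$ for every $\mc{L}_{\mathrm{RBL}}$-formula $A$. I would prove this by induction on the structure of $A$. The base cases are handled directly by the axioms: $A=p$ atomic is $(\mathrm{Id}')$ itself; $A=\top$ follows from $(\top)$; and $A=\bot$ follows from $(\bot)$. For the inductive cases, assuming $B\Imp B$ and $C\Imp C$ are derivable in $\mathsf{L_{RBL}'}$, I would build $A\Imp A$ using only the shared logical and structural rules: for $A=B\cdot C$ apply $(\mathrm{\cdot R})$ to the two hypotheses and then $(\mathrm{\cdot L})$; for $A=B\imp C$ apply $(\mathrm{\imp L})$ to obtain $B\odot(B\imp C)\Imp C$ and then $(\mathrm{\imp R})$; for $A=B\leftarrow C$ apply $(\mathrm{\leftarrow L})$ to obtain $(B\leftarrow C)\odot C\Imp B$ and then $(\mathrm{\leftarrow R})$; for $A=B\vee C$ apply $(\mathrm{\vee R})$ to each hypothesis and then $(\mathrm{\vee L})$; and for $A=B\wedge C$ first weaken $B\Imp B$ and $C\Imp C$ by $(\mathrm{W}^1)$, $(\mathrm{W}^2)$ and apply $(\mathrm{\wedge L})$ to obtain $B\wedge C\Imp B$ and $B\wedge C\Imp C$, then conclude by $(\mathrm{\wedge R})$.

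With the generalized identity in hand, the left-to-right direction follows by a leaf-replacement argument: given an $\mathsf{L_{RBL}}$-derivation of $\Gamma\Imp A$, I would replace each leaf that is an instance $B\Imp B$ of $(\mathrm{Id})$ by the $\mathsf{L_{RBL}'}$-derivation of $B\Imp B$ constructed above, leaving the rest of the proof tree untouched. Since all remaining axioms and rules already belong to $\mathsf{L_{RBL}'}$, the result is a genuine $\mathsf{L_{RBL}'}$-derivation of $\Gamma\Imp A$. The one point requiring care—and the only genuine obstacle—is the inductive construction for the conjunction case, where the structural weakening rules $(\mathrm{W}^1)$, $(\mathrm{W}^2)$ must be inserted correctly so that $(\mathrm{\wedge L})$ becomes applicable; the remaining cases are direct applications of the matching left and right rules and present no difficulty.
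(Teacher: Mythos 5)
Your proposal is correct and follows essentially the same route as the paper: the right-to-left direction is immediate because $\mathsf{L_{RBL}'}$ is a subsystem, and the left-to-right direction reduces to the admissibility of $(\mathrm{Id})$ in $\mathsf{L_{RBL}'}$, proved by induction on the complexity of $A$ using the matching left/right rules for each connective. Your write-up is in fact slightly more careful than the paper's: you make the leaf-replacement step explicit, you spell out the weakening steps $(\mathrm{W}^1)$, $(\mathrm{W}^2)$ needed in the $\wedge$ case (which the paper dismisses as ``done easily''), and your $\leftarrow$ case correctly derives $(B\leftarrow C)\odot C\Imp B$, whereas the paper's text contains a typo at that point.
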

\begin{proof}
The right-to-left direction is obvious. For the other direction it suffices to show that $(\mathrm{Id})$ is admissible in $\mathsf{L_{RBL}'}$. We proceed by induction on the complexity of $A$. The cases of $\wedge$, $\vee$ and $\cdot$ are done easily by inductive hypothesis. For $A=A_1\leftarrow A_2$, by inductive hypothesis, $\vdash_{\mathsf{L_{RBL}'}}A_1\Imp A_1$ and $\vdash_{\mathsf{L_{RBL}'}}A_2\Imp A_2$. Then by $(\leftarrow\mathrm{L})$ we get $(A_1\leftarrow A_2)\odot A_1\Imp A_2$. By $(\leftarrow\mathrm{R})$, we get $A_1\leftarrow A_2\Imp A_1\leftarrow A_2$. The case of $\imp$ is similar to the case $\leftarrow$.
\end{proof}

It follows immediately that all sequents in proposition \ref{prop:1} hold in the sequent calculus $\mathsf{L_{RBL}'}$.

For any $\mc{L}_{\mathrm{RBL}}$-sequent $\Gamma\Imp A$ and an occurrence of positive subformula $B$ in it, we define $\Gamma\Imp A[B/\top \imp B]$ as the sequent obtained from $\Gamma\Imp A$ by replacing this occurrence of $B$ by $\top\imp B$.

\begin{lemma}\label{lemma:2}
For any $\mc{L}_{\mathrm{RBL}}$-sequent $\Gamma\Imp A$ and an occurrence of positive subformula $B$ in it, if $\vdash_{\mathsf{L_{RBL}'}}\Gamma\Imp A$, then $\vdash_{\mathsf{L_{RBL}'}}\Gamma\Imp A[B/\top\imp B]$.
\end{lemma}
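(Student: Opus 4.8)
The plan is to derive Lemma \ref{lemma:2} from a single \emph{monotonicity} (congruence) principle together with one application of (Cut), rather than by a direct induction on the derivation of $\Gamma\Imp A$. The key input is that $\top\imp B$ is logically \emph{weaker} than $B$, i.e. $\vdash_{\mathsf{L_{RBL}'}}B\Imp\top\imp B$; replacing a positive occurrence of $B$ by $\top\imp B$ then amounts to weakening the sequent at a monotone position, which preserves derivability.

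First I would record the basic entailment. By Lemma \ref{lemma:rblequiv} the unrestricted axiom $(\mathrm{Id})$ is admissible in $\mathsf{L_{RBL}'}$, so $\vdash_{\mathsf{L_{RBL}'}}D\Imp D$ for every formula $D$. Applying $(\mathrm{W}^1)$ with $*=\odot$ and $\Delta'=\top$ to $B\Imp B$ yields $\top\odot B\Imp B$, and then $(\imp\mathrm{R})$ gives $B\Imp\top\imp B$. I would also fix the (standard) extension of the polarity convention, which is stated above only for the $\mathsf{Int}$-connectives, to the remaining RBL connectives: in $A\cdot B$ both arguments keep the polarity of the whole, while in $A\leftarrow B$ the left argument keeps and the right argument reverses it; thus $\cdot$ is monotone in both arguments, whereas $\imp$ and $\leftarrow$ are antitone in their ``antecedent'' argument.

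Next I would prove, by induction on the formula $C$, a two-sided congruence lemma: assuming $\vdash_{\mathsf{L_{RBL}'}}B\Imp B'$ and writing $C'=C[B/B']$ for the replacement of one distinguished occurrence, if that occurrence is positive in $C$ then $\vdash_{\mathsf{L_{RBL}'}}C\Imp C'$, and if it is negative then $\vdash_{\mathsf{L_{RBL}'}}C'\Imp C$. The base case $C=B$ is exactly the given entailment. Each induction step uses a one-step congruence fact for the outermost connective, all of which are short derivations from the logical rules, (Cut) and admissible identity: $X\Imp X'$ gives $X\wedge Y\Imp X'\wedge Y$, $X\vee Y\Imp X'\vee Y$, $X\cdot Y\Imp X'\cdot Y$, $Y\cdot X\Imp Y\cdot X'$ and $(Y\imp X)\Imp(Y\imp X')$ (the monotone cases), while dually $X'\Imp X$ gives $(X\imp Y)\Imp(X'\imp Y)$ (the antitone case), and symmetrically for $\leftarrow$. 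At a subformula whose polarity reverses (the left argument of $\imp$, the right argument of $\leftarrow$) the induction hypothesis is invoked in the opposite Pos/Neg mode, which matches exactly the direction supplied by the corresponding antitone congruence fact.

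Finally I would lift the congruence lemma to sequents with (Cut), applied with $B'=\top\imp B$ and the entailment $\vdash_{\mathsf{L_{RBL}'}}B\Imp\top\imp B$. If the positive occurrence lies in the succedent $A$, then the positive case gives $\vdash_{\mathsf{L_{RBL}'}}A\Imp A[B/\top\imp B]$, and cutting this against $\Gamma\Imp A$ yields $\Gamma\Imp A[B/\top\imp B]$. If it lies in an antecedent leaf $C$ of the structure $\Gamma$, observe that an occurrence which is positive \emph{in the sequent} is negative \emph{in} the standalone formula $C$ (antecedent roots being negative); hence the negative case gives $\vdash_{\mathsf{L_{RBL}'}}C[B/\top\imp B]\Imp C$, and cutting this into the surrounding context replaces that leaf, producing the required sequent. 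I expect the only delicate point to be the polarity bookkeeping, namely keeping the Pos/Neg mode of the induction synchronized with the reversals at the antecedent arguments of $\imp$ and $\leftarrow$ and correctly converting ``positive in the sequent'' into the standalone polarity of an antecedent formula; once the congruence facts above are in hand, the remaining combinatorics is routine.
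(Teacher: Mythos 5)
Your proof is correct, but it takes a genuinely different route from the paper. The paper proves the lemma by induction on the $\mathsf{L_{RBL}'}$-derivation of $\Gamma\Imp A$, tracing the distinguished positive occurrence through every rule of the calculus; the only non-routine case is when $B$ is itself the succedent formula just introduced by a right rule, where $(\mathrm{W^1})$ followed by $(\imp\mathrm{R})$ prefixes the $\top\imp$. You instead factor the statement into (i) the basic entailment $\vdash_{\mathsf{L_{RBL}'}}B\Imp\top\imp B$ (the same $(\mathrm{W^1})$ plus $(\imp\mathrm{R})$ trick, applied once at the leaf), (ii) a polarity-sensitive congruence lemma proved by induction on the formula rather than on the derivation, and (iii) a single application of $(\mathrm{Cut})$, which is legitimate since $(\mathrm{Cut})$ is a primitive rule of $\mathsf{L_{RBL}}$, hence of $\mathsf{L_{RBL}'}$, and Lemma \ref{lemma:rblequiv} (proved before this lemma, so no circularity) supplies the admissible identities your one-step congruence facts need. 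Your polarity bookkeeping is sound: an occurrence that is positive in the sequent is negative in the standalone antecedent formula, so your negative mode delivers $C[B/\top\imp B]\Imp C$, which cuts into the context in the right direction; and, crucially, the two-mode induction bottoms out only in the positive mode, so you never need the underivable converse $\top\imp B\Imp B$. What each approach buys: yours is shorter, more modular, and proves a strictly more general replacement principle (any provable $B\Imp B'$ may be substituted at positive occurrences, and the dual at negative ones), and it makes explicit the polarity convention for $\cdot$ and $\leftarrow$ that the paper leaves implicit; the paper's derivation-induction, on the other hand, introduces no new cuts, so it preserves cut-free derivability of the transformed sequent --- a property your argument gives up, harmlessly for the paper's purposes, since the paper itself uses $(\mathrm{Cut})$ freely in the subsequent embedding theorem.
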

\begin{proof}
By induction on the derivation of $\Gamma\Imp A$ in $\mathsf{L_{RBL}'}$.

($\mathrm{Id'}$) we have $p\Imp p[B/\top\imp B] = p\Imp \top\imp p$ which is deribale in $\mathsf{L_{RBL}'}$.

($\top$) Let $A\Imp\top$. If $B=\top$, then $A\Imp \top[B/\top\imp B] = A\Imp \top\imp\top$ is derivable in $\mathsf{L_{RBL}'}$. Otherwise, $B$ is in $A$ and $A[B/\top\imp B] \Imp \top$ is an instance of axiom.

($\bot$) $B$ must be contained in $A$ and the sequent $\bot\Imp A[B/\top\imp B]$ is an instance of axiom in $\mathsf{L_{RBL}'}$.

($\rightarrow\mathrm{L}$) Let the premises be $\Delta\Imp A$ and $\Gamma[D]\Imp C$, and the conclusion $\Gamma[\Delta\odot(A\imp D)]\Imp C$. Consider the sequent $\Gamma[\Delta\odot(A\imp D)]\Imp C[B/\top\imp B]$.
Since $A\imp D$ is not positive, $B$ is in $\Delta\Imp A$ or $\Gamma[D]\Imp C$. Hence by inductive hypothesis and $(\rightarrow\mathrm{L})$, we get the required sequent. The proof of cases ($\cdot\mathrm{L}$), ($\leftarrow\mathrm{L}$) $(\wedge\mathrm{L})$, $(\vee\mathrm{L})$, $(\owedge\mathrm{C})$, $(\odot\mathrm{C})$, $(\owedge\mathrm{E})$, $(\owedge\mathrm{A^1})$, $(\owedge\mathrm{A^2})$, $(\mathrm{Cut})$, are quite similar, since none of these rules create a new positive formula in the derivation.

($\imp\mathrm{R}$) Let the premise be $A\odot\Gamma\Imp D$ and the conclusion $\Gamma\Imp A\imp D$. If $B$ is in $A\odot \Gamma\Imp D$, then by inductive hypothesis and $(\imp\mathrm{R})$, we have $\Gamma\Imp A\imp D[B/\top\imp B]$. Otherwise $B=A\imp D$. Then from $A\odot\Gamma\Imp D$. by ($\imp\mathrm{R}$) we get $\Gamma\Imp A\imp D$. Then by $(\mathrm{W^1})$ we get $\top\odot\Gamma\Imp A\imp D$. Hence by $\imp\mathrm{R}$ we get $\Gamma\Imp \top\imp(A\imp D)$.
The proof of cases ($\leftarrow\mathrm{R}$), $(\cdot\mathrm{R})$, $(\wedge\mathrm{R})$ and $(\vee\mathrm{R})$ are quite similar.

$(\mathrm{W^1})$ Let the premise be $\Gamma[ \Delta_2]\Imp C$ and the conclusion $\Gamma[\Delta_1* \Delta_2]\Imp C$. Then $B$ is in the premise. Hence by inductive hypothesis and $(\mathrm{W^1})$, we get the required sequent. Otherwise $B$ occurs in $\Delta_1$. Then by $(\mathrm{W^1})$ we get the required sequent directly. The proof of cases $(\mathrm{W^2})$ is quite similar.
\end{proof}

\begin{corollary}\label{corollary:cor1}
For any $\mc{L}_{\mathrm{RBL}}$-sequent $\Gamma\Imp A$ and $0<i<j$, if $\vdash_{\mathsf{L_{RBL}'}}\Gamma\Imp A$, then $\vdash_{\mathsf{L_{RBL}'}}\Gamma\Imp A[B^{\#i}/B^{\#j}]$.
\end{corollary}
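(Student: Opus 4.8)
The plan is to derive the corollary by iterating Lemma \ref{lemma:2}. The essential observation is that, under the standing reading that $B^{\#i}$ occurs as a positive subformula of $\Gamma\Imp A$, the formula $B^{\#j}$ is obtained from $B^{\#i}$ by adding, at each positive-subformula position of $B$, exactly $j-i$ further copies of the prefix $\top\imp$. Indeed, at such a position $B^{\#i}$ carries the prefix $\top^i\imp$ while $B^{\#j}$ carries $\top^j\imp$, and by definition $\top^{n+1}\imp E=\top\imp(\top^n\imp E)$. Hence a single increment of the parameter corresponds exactly to prefixing one positive subformula of the shape $\top^n\imp E$ by one additional $\top\imp$, which is precisely the replacement licensed by Lemma \ref{lemma:2} (available, via Lemma \ref{lemma:rblequiv}, in $\mathsf{L_{RBL}'}$).

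First I would reduce to the single-step case by induction on $j-i$: it suffices to show that whenever $\vdash_{\mathsf{L_{RBL}'}}\Gamma\Imp A$ with a positive occurrence of $B^{\#n}$, then $\vdash_{\mathsf{L_{RBL}'}}\Gamma\Imp A[B^{\#n}/B^{\#(n+1)}]$; the general statement then follows by composing $j-i$ such steps, the relevant occurrence remaining positive throughout. For the single step I would proceed position by position through the finitely many positive-subformula positions of $B$ (equivalently, by structural induction on $B$). At each such position $B^{\#n}$ exhibits a subformula $\top^n\imp E$ which, since $B^{\#n}$ occurs positively and the $\#$-translation inserts the prefix $\top^n\imp$ exactly at positive positions, is itself a positive subformula of $\Gamma\Imp A$. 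Applying Lemma \ref{lemma:2} to it replaces $\top^n\imp E$ by $\top\imp(\top^n\imp E)=\top^{n+1}\imp E$ while preserving $\mathsf{L_{RBL}'}$-derivability. Carrying out this replacement at every positive-subformula position converts $B^{\#n}$ into $B^{\#(n+1)}$, establishing the single-step claim.

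The bulk of the argument is bookkeeping, and the one point requiring care — the part I expect to be the main obstacle — is to verify that every subformula we bump is genuinely positive in the whole sequent and stays positive after the earlier bumps. This holds because replacing a positive subformula $D$ by $\top\imp D$ leaves the polarity of $D$, and of all its subformulas, unchanged: the freshly introduced $\top$ is negative, and $D$ occupies the same positive position as before. Consequently the finitely many target positions of $B^{\#n}$ all remain positive during the process, each invocation of Lemma \ref{lemma:2} is legitimate, and the ordering in which we treat the positions (say from the outermost inward) is immaterial.
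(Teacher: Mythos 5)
Your core strategy --- obtain the corollary by iterating Lemma \ref{lemma:2} once per added $\top\imp$ prefix, position by position, with an outer induction on $j-i$, checking that polarities are undisturbed by the replacements --- is exactly the argument the paper intends (the paper gives no separate proof, treating the corollary as immediate from Lemma \ref{lemma:2}), and your observations that prefixing a positive occurrence preserves all polarities and that the order of the bumps is immaterial are both correct.

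The one genuine problem is your ``standing reading'' that $B^{\#i}$ occurs as a \emph{positive} subformula of $\Gamma\Imp A$. That restriction is incompatible with how the corollary is actually used in the proof of Theorem \ref{thm:g4ip}: there it is invoked to pass, e.g., from $A^{\#i}\owedge B^{\#i}\owedge\Gamma^{\#i}\Imp C^{\#i}$ to $A^{\#j}\owedge B^{\#j}\owedge\Gamma^{\#j}\Imp C^{\#j}$, i.e.\ to bump the antecedent formulas as well, and these occur \emph{negatively}. Your justification that each prefixed subformula $\top^n\imp E$ is positive in the sequent (``since $B^{\#n}$ occurs positively and the translation inserts prefixes exactly at positive positions'') therefore does not cover the cases for which the corollary is needed. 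The repair is short but must be stated: the translation $(\cdot)^{\#n}$ is itself polarity-sensitive --- by its definition it inserts $\top^n\imp$ exactly at the positions that are positive \emph{relative to the ambient sequent}, so for a negatively occurring antecedent formula the prefixes sit at its sequent-positive (not formula-positive) positions. Hence every prefixed position is sequent-positive irrespective of the polarity of the occurrence of $B^{\#i}$ itself, and each application of Lemma \ref{lemma:2} remains legitimate. Equivalently, you can reduce the negative case to your positive case by bumping the maximal positively occurring translated subformulas $D^{\#i}$ inside $B^{\#i}$, noting that outside of these the translation does not depend on the parameter $n$. With the claim re-scoped in this way, your induction on $j-i$ and the position-by-position argument go through unchanged.
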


\begin{theorem}\label{thm:g4ip}
For any  $\mc{L}_{\mathrm{Int}}$-sequent $\Gamma\Imp A$, if $\vdash_{\mathsf{G4ip}}\Gamma\Imp A$, then $\vdash_{\mathsf{L_{RBL}'}}\Gamma\Imp A)^\#$.
\end{theorem}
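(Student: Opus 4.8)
The plan is to argue by induction on the height of the given $\mathsf{G4ip}$-derivation of $\Gamma\Imp A$. Throughout, abbreviate $n=w(\Gamma\Imp A)$, the weight of the conclusion, so that $(\Gamma\Imp A)^{\#}$ is obtained by prefixing $\top^n$ to every positive subformula occurrence and reading each comma as $\owedge$ (with the empty-antecedent case yielding $\top\Imp A^{\#n}$). The decisive bookkeeping fact is the one recorded just after the definition of weight: every premise of every $\mathsf{G4ip}$-rule has strictly smaller weight than its conclusion. Hence for a premise $\Gamma'\Imp A'$ of weight $n'<n$ the induction hypothesis supplies $\vdash_{\mathsf{L_{RBL}'}}(\Gamma'\Imp A')^{\#}$, which carries the \emph{smaller} superscript $n'$. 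The first move in each inductive case is therefore to apply Corollary~\ref{corollary:cor1} finitely many times to raise every superscript $\#n'$ to the uniform superscript $\#n$ (legitimate precisely because $n'<n$), so that all translated premises become available in their $\#n$-form. A second recurring device is the passage between the additive commas ($\owedge$) of the translation and the multiplicative structure ($\odot$) demanded by the $\mathsf{L_{RBL}'}$ rules: since $A\cdot B\Imp A$, $A\cdot B\Imp B$ and hence $A\cdot B\Imp A\wedge B$ are derivable (these are exactly the inequations established in Section~4), one may always replace a $\owedge$-antecedent by the smaller $\odot$-antecedent via $(\mathrm{Cut})$ before feeding a multiplicative rule.

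With the premises normalized to superscript $n$, the axioms and the logical rules for $\wedge$, $\vee$ and $\imp$-right are routine. The axioms $p,\Gamma\Imp p$ and $(\bot)$ translate to sequents derivable from $(\mathrm{Id}')$, $(\bot)$, the weakenings $(\mathrm{W^1}),(\mathrm{W^2})$ and Lemma~\ref{lemma:2} (which supplies, for instance, $p\Imp\top^n\imp p$). The rules $(\wedge L),(\wedge R),(\vee L),(\vee R)$ and $(\imp R)$ are simulated by their $\mathsf{L_{RBL}'}$-counterparts, using the structural rules $(\owedge C),(\owedge E),(\owedge A^1),(\owedge A^2)$ and weakening to merge and rearrange contexts (this is needed, e.g., in $(\vee L)$, whose two $\mathsf{G4ip}$-premises carry different contexts). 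The outer guards $\top^n\imp(-)$ attached to positive conjunctions, disjunctions and implications are introduced and commuted past the connectives using items (6)--(8) of Proposition~\ref{prop:1} together with iterated $(\mathrm{W^1})$ and $(\imp R)$.

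The heart of the argument is the family of left-implication rules $(\imp L_1)$--$(\imp L_4)$, and this is where the work concentrates. Each rewrites a negative implication in the antecedent into simpler implications in the premises, and under the translation the positive subformula to the right of the principal arrow carries a $\top^n$-guard; the sequents (9), (10) and (11) of Proposition~\ref{prop:1} are engineered precisely to discharge these guarded implications. Concretely, for $(\imp L_2)$ one applies the induction hypothesis and Corollary~\ref{corollary:cor1} to the premise, then threads item~(9) through a $(\mathrm{Cut})$ inside the context $\Gamma^{\#n}$ to replace the premise's principal formula $C^{\#n}\imp(D^{\#n}\imp B^{\#n})$ by the conclusion's principal formula $(\top^n\imp(C^{\#n}\wedge D^{\#n}))\imp B^{\#n}$; $(\imp L_3)$ proceeds identically via item~(10), and $(\imp L_4)$ via item~(11), with the first premise of $(\imp L_4)$ furnishing the hypothesis $D^{\#n}\imp B^{\#n}$ required by item~(11). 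The atomic case $(\imp L_1)$ has no dedicated lemma and is handled by a direct derivation: one fires the guarded implication $(\top^n\imp p)\imp B^{\#n}$ against a copy of the additive atom $p$ (duplicated by $(\owedge C)$) through $p\Imp\top^n\imp p$ and $(\imp L)$, retaining the second copy of $p$ demanded by the premise.

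I expect the main obstacle to be exactly this guarded firing of left-implications, namely checking that the number $n$ of $\top$-guards matches the weight budget so that the guarded modus ponens goes through. The subtlety is that $\mathsf{L_{RBL}'}$ lacks $A\wedge B\Imp A\cdot B$, so an implication sitting in an additive context cannot simply be fed its argument; instead the guard $\top^n\imp(-)$ together with the succedent's own guard $E^{\#n}=\top^n\imp E'$ must absorb the requisite left-multiplications by $\top$, using weakening and the restricted contraction axiom $A\cdot B\Imp(A\cdot B)\cdot B$. It is precisely to have enough such $\top$'s available after raising superscripts that the strict decrease of weight (and hence Corollary~\ref{corollary:cor1}) is indispensable. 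Among the four rules I anticipate $(\imp L_4)$ to be the most demanding, since it has two premises of differing shape, its principal formula $(C\imp D)\imp B$ mixes polarities, and item~(11) must be combined with both induction hypotheses simultaneously; $(\imp L_1)$ is delicate for the opposite reason, that it alone is discharged without a prepackaged auxiliary sequent.
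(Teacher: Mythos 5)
Your proposal is correct and follows essentially the same route as the paper's own proof: induction on the $\mathsf{G4ip}$-derivation, normalization of superscripts via Corollary~\ref{corollary:cor1} (justified by the strict weight decrease), simulation of the additive rules with the structural rules, mediation between $\owedge$ and $\odot$ via items (1), (3), (5) of Proposition~\ref{prop:1}, the prepackaged sequents (9)--(11) cut against the translated premises for $(\imp\mathrm{L}_2)$--$(\imp\mathrm{L}_4)$, and a direct guarded-modus-ponens derivation from $p\Imp\top^n\imp p$ for $(\imp\mathrm{L}_1)$. The only divergence is in your favor: you correctly assign item (11) to $(\imp\mathrm{L}_4)$, whereas the paper cites ``(10)'' there owing to an off-by-one slip in the labelling of Proposition~\ref{prop:1}'s proof.
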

\begin{proof}
We proceed by induction on the derivation of $\Gamma\Imp A$ in $\mathsf{G4ip}$. It suffices to show that all rules of G4ip are admissible under the translation $\#$. The axioms $(\mathrm{Id})$ and $\mathrm{(\bot)}$ are easy. For $(\wedge\mathrm{L})$, let the premise be $A,B,\Gamma\Imp C$ with weight $i$, and the conclusion $A\wedge B, \Gamma\Imp C$ with weight $j$. Assume $A^{\#i}\owedge B^{\#i}\owedge \Gamma^{\#i}\Imp C^{\#i}$. By corollary \ref{corollary:cor1}, we get $A^{\#j}\owedge B^{\#j} \owedge \Gamma^{\#j}\Imp C^{\#j}$. Then by $(\wedge\mathrm{L})$, we get $(A^{\#j}\wedge B^{\#j})\owedge \Gamma^{\#j}\Imp C^{\#j}$. Hence $(A\wedge B)^{\#j}\owedge \Gamma^{\#j}\Imp C^{\#j}$. The case $(\vee\mathrm{L})$ is quite similar.

($\wedge\mathrm{R}$) Let the premises be $\Gamma\Imp A$ with weight $i_1$ and $\Gamma\Imp B$ with weight $i_2$, and the conclusion $\Gamma\Imp A\wedge B$ with weight $j$. Note that $i_1,i_2<j$. Assume $\Gamma^{\#i_1}\Imp A^{\#i_1}$ and $\Gamma^{\#i_2}\Imp B^{\#i_2}$. By corollary \ref{corollary:cor1}, we get $\Gamma^{\#j}\Imp A^{\#j}$ and $\Gamma^{\#j}\Imp B^{\#j}$. Hence by $(\wedge\mathrm{R})$, we get $\Gamma^{\#j}\Imp A^{\#j}\wedge B^{\#j}$ . Hence by ($\mathrm{W^1}$) and ($\mathrm{\imp R}$), we obtain $\Gamma^{\#j}\Imp \top^{j}\imp ( A^{\#j}\wedge B^{\#j})$ The cases $(\vee\mathrm{R})$ and $\imp\mathrm{R}$ are quite similar. Now Let us check the $\imp$-rules.

($\imp \mathrm{L}^1$) Let the premise be $p,B,\Gamma\Imp E$ with weight $i$, and the conclusion $p\imp B, p, \Gamma\Imp E$ with weight $j$. Note that $i<j$. Then
$(p,B,\Gamma\Imp E)^{\#i} = p\owedge B^{\#i}\owedge \Gamma^{\#i}\Imp E^{\#i}$, and
$(p\imp B,p,\Gamma\Imp E)^{\#j} = (\top^j\imp p)\imp B^{\#j}\owedge p\owedge \Gamma^{\#j} \Imp E^{\#j}$. Assume that $\vdash_{\mathrm{L_{RBL'}}} p\owedge B^{\#i}\owedge \Gamma^{\#i}\Imp E^{\#i}$.
By assumption and $(\owedge\mathrm{E})$ we get $B^{\#i}\owedge p\owedge \Gamma^{\#i}\Imp  E^{\#i}$.
Since $p\Imp \top^j\imp p$ is provable, we apply $(\imp\mathrm{L})$ to $p\Imp \top^j\imp p$ and $B^{\#i}\owedge p\owedge \Gamma^{\#i}\Imp E^{\#i}$, and get
$(p\odot ((\top^j\imp p)\imp B^{\#i}))\owedge p\owedge \Gamma^{\#i}\Imp E^{\#i}$.
By $(\mathrm{W^1})$, we get $((\top\odot p)\odot ((\top^j\imp p)\imp B^{\#i}))\owedge p\owedge \Gamma^{\#i}\Imp E^{\#i}$.
By proposition \ref{prop:1} (1) and (Cut), we obtain $ \top\odot (p\owedge (\top^j\imp p)\imp B^{\#i}))\Imp (\top\cdot p)\cdot ((\top^j\imp p)\imp B^{\#i})$.
Hence $(\top\odot (p\owedge (\top^j\imp p)\imp B^{\#i}))\owedge (p\owedge \Gamma^{\#i})\Imp E^{\#i}$.
By $(\mathrm{W^1})$, we get $(\top\odot (p\owedge (\top^j\imp p)\imp B^{\#i}))\owedge (\top\odot (p\owedge \Gamma^{\#i}))\Imp E^{\#i}$.
Hence by proposition \ref{prop:1} (3), we get $\top\odot (((p\owedge (\top^j\imp p)\imp B^{\#i}))\owedge (p\owedge \Gamma^{\#i}))\Imp E^{\#i}$.
By $(\imp\mathrm{R})$, we get $(p\owedge ((\top^j\imp p)\imp B^{\#i}))\owedge (p\owedge \Gamma^{\#i})\Imp \top \imp E^{\#i}$.
By $(\owedge\mathrm{C})$ and $(\owedge\mathrm{E})$, we get $(\top^j\imp p)\imp B^{\#i}\owedge p\owedge \Gamma^{\#i} \Imp \top \imp E^{\#i}$.
Finally, since $j\geq i+1$, by corollary \ref{corollary:cor1}, we get $(\top^j\imp p)\imp B^{\#j}\owedge p\owedge \Gamma^{\#j} \Imp  E^{\#j}$.

($\imp \mathrm{L}^2$) Let the premise be $C\imp(D\imp B),\Gamma\Imp E$ with weight $i$, and the conclusion $C\wedge D\imp B,\Gamma\Imp E$ with weight $j$. Then $(C\imp(D\imp B),\Gamma\Imp E)^{\#i}= (C^{\#i}\imp( D^{\#i}\imp B^{\#i}))\owedge \Gamma^{\#i}\Imp E^{\#i}$, and $((C\wedge D)\imp B),\Gamma\Imp E)^{\#i}=((\top^j\imp(C^{\#j}\wedge D^{\#j}))\imp B^{\#j})\owedge \Gamma^{\#j}\Imp E^{\#j}$. Assume that $\vdash_\mathrm{{L_{RBL'}}}(C^{\#i}\imp( D^{\#i}\imp B^{\#i}))\owedge \Gamma^{\#i}\Imp E^{\#i}$.
By assumption and corollary \ref{corollary:cor1}, we get $(C^{\#j}\imp( D^{\#j}\imp B^{\#j}))\owedge \Gamma^{\#j}\Imp E^{\#j}$. Hence by proposition \ref{prop:1} (9) and (Cut), we obtain $((\top^j\imp(C^{\#j}\wedge D^{\#j}))\imp B^{\#j})\owedge \Gamma^{\#j}\Imp E^{\#j}$.

($\imp \mathrm{L}^3$) Let the premise be $C\imp B, D\imp B,\Gamma\Imp E$ with weight $i$, and the conclusion $C\vee D\imp B,\Gamma\Imp E$ with weight $j$. Then $(C\imp B, D\imp B,\Gamma\Imp E)^{\#i}=( C^{\#i}\imp B^{\#i})\owedge(D^{\#i}\imp B^{\#i})\owedge \Gamma^{\#i}\Imp E^{\#i}$, and $(C\vee D\imp B,\Gamma\Imp E)^{\#j}=((\top^j\imp( C^{\#j}\vee D^{\#j}))\imp B^{\#j})\owedge \Gamma^{\#j}\Imp E^{\#j}$. Assume that $\vdash_\mathrm{{L_{RBL'}}} ( C^{\#i}\imp B^{\#i})\owedge(D^{\#i}\imp B^{\#i})\owedge \Gamma^{\#i}\Imp E^{\#i}$. By assumption and corollary \ref{corollary:cor1}, we get $( C^{\#j}\imp B^{\#j})\owedge(D^{\#j}\imp B^{\#j})\owedge \Gamma^{\#j}\Imp E^{\#j}$. Hence by proposition \ref{prop:1} (10) and (Cut), we obtain $((\top^j\imp( C^{\#j}\vee D^{\#j}))\imp B^{\#j})\owedge \Gamma^{\#j}\Imp E^{\#j}$.

($\imp \mathrm{L}^4$)  Let the premises be $D\imp B, C, \Gamma\Imp D$ with weight $i_1$ and $B, \Gamma\Imp E$ with weight $i_2$. Let the conclusion be $(C\imp D)\imp B,\Gamma,\Imp E$ with weight $j$. Suppose that $i_1,i_2<j$. Assume that $\vdash_\mathrm{{L_{RBL'}}} (D^{\#i_1}\imp B^{\#i_1})\owedge C^{\#i_1} \owedge \Gamma^{\#i_1}\Imp D^{\#i_1}$  and $\vdash_{\mathrm{L_{RBL'}}} B^{\#i_2}\owedge \Gamma^{\#i_2}\Imp E^{\#i_2}$.  It suffices to show that $\vdash_\mathrm{{L_{RBL'}}} ((\top^j\imp(C^{\#j}\imp D^{\#j}))\imp B^{\#j})\owedge\Gamma^{\#j}\Imp E^{\#j}$. Let us consider the first premise. By ($\owedge A^1$),($\owedge A^1$), ($\owedge E$), proposition \ref{prop:1} (5) and (Cut) we get $C^{\#i_1}\odot (( D^{\#i_1}\imp B^{\#i_1})\owedge\Gamma^{\#i_1})\Imp D^{\#i_1}$. Then by $(\imp\mathrm{R})$, we get $( D^{\#i_1}\imp B^{\#i_1})\owedge\Gamma^{\#i_1}\Imp C^{\#i_1}\imp D^{\#i_1}$. Hence by applying ($\mathrm{\imp L}$) to this resulting sequent and the second premise $B^{\#i_2}\owedge \Gamma^{\#i_2}\Imp E^{\#i_2}$, we obtain $((( D^{\#i_1}\imp B^{\#i_1})\owedge\Gamma^{\#i_1})\odot((C^{\#i_1}\imp D^{\#i_1})\imp B^{\#i_2})\owedge \Gamma^{\#i_2}\Imp E^{\#i_2}$.
By ($\mathrm{W^1}$), we get $(\top \odot ( D^{\#i_1}\imp B^{\#i_1}\owedge\Gamma^{\#i_1})\odot(C^{\#i_1}\imp D^{\#i_1})\imp B^{\#i_2})\owedge \Gamma^{\#i_2}\Imp E^{\#i_2}$. By Proposition \ref{prop:1} (1) and (Cut), $\top \odot (( D^{\#i_1}\imp B^{\#i_1}\owedge\Gamma^{\#i_1})\owedge ((C^{\#i_1}\imp D^{\#i_1})\imp B^{\#i_2}))\Imp (\top \cdot (( D^{\#i_1}\imp B^{\#i_1})\wedge\mu(\Gamma^{\#i_1}))\cdot((C^{\#i_1}\imp D^{\#i_1})\imp B^{\#i_2})$. So by ($\mathrm{\cdot R}$), ($\mathrm{\wedge L}$) and (Cut), we get $(\top \odot (( D^{\#i_1}\imp B^{\#i_1}\owedge\Gamma^{\#i_1})\owedge ((C^{\#i_1}\imp D^{\#i_1})\imp B^{\#i_2})))\owedge \Gamma^{\#i_2}\Imp E^{\#i_2}$. Again by ($\mathrm{W^1}$), we obtain $(\top \odot (( D^{\#i_1}\imp B^{\#i_1}\owedge\Gamma^{\#i_1})\owedge ((C^{\#i_1}\imp D^{\#i_1})\imp B^{\#i_2})))\owedge (T\odot \Gamma^{\#i_2})\Imp E^{\#i_2}$. By Proposition \ref{prop:1} (3), (Cut), ($\mathrm{\owedge A^1}$) and ($\mathrm{\owedge A^2}$), we get $ T \odot ((D^{\#i_1}\imp B^{\#i_1})\owedge\Gamma^{\#i_1}\owedge ((C^{\#i_1}\imp D^{\#i_1})\imp B^{\#i_2})\owedge \Gamma^{\#i_2})\Imp E^{\#i_2}$. So by ($\mathrm{\imp R}$), we obtain $ (D^{\#i_1}\imp B^{\#i_1})\owedge\Gamma^{\#i_1}\owedge( (C^{\#i_1}\imp D^{\#i_1})\imp B^{\#i_2})\owedge \Gamma^{\#i_2}\Imp \top \imp E^{\#i_2}$. Since $j\geq i_2+1, i_1+1$, by corollary \ref{corollary:cor1}, we get $ D^{\#j}\imp B^{\#j}\owedge\Gamma^{\#j}\owedge (C^{\#j}\imp D^{\#j})\imp B^{\#j}\owedge \Gamma^{\#j}\Imp E^{\#j}$. Hence by ($\mathrm{\owedge E}$),  ($\mathrm{\owedge C}$), and ($\mathrm{\wedge L}$), we obtain $ (D^{\#j}\imp B^{\#j})\wedge ((C^{\#j}\imp D^{\#j})\imp B^{\#j})\owedge \Gamma^{\#j}\Imp E^{\#j}$. Finally by by proposition \ref{prop:1} (10) and (Cut), we get $((\top^j\imp(C^{\#j}\imp D^{\#j}))\imp B^{\#j})\owedge\Gamma^{\#j}\Imp E^{\#j}$.
\end{proof}

By lemma \ref{lemma:rblequiv} and theorem \ref{thm:g4ip}, we get the following theorem.

\begin{theorem}\label{thm:g4ip}
For any  $\mc{L}_{\mathrm{Int}}$-sequent $\Gamma\Imp A$, if $\vdash_{\mathsf{G4ip}}\Gamma\Imp A$, then $\vdash_{\mathsf{L_{RBL}}}\Gamma\Imp A)^\#$.
\end{theorem}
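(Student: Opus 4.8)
The plan is to obtain the final theorem as an immediate corollary of the two results that precede it, namely Lemma~\ref{lemma:rblequiv} and the previous version of Theorem~\ref{thm:g4ip}. The latter tells us that whenever $\vdash_{\mathsf{G4ip}}\Gamma\Imp A$ we already have $\vdash_{\mathsf{L_{RBL}'}}(\Gamma\Imp A)^\#$ in the restricted calculus $\mathsf{L_{RBL}'}$, and the former tells us that $\mathsf{L_{RBL}'}$ and $\mathsf{L_{RBL}}$ prove exactly the same sequents. So the only work is to chain these two facts together.

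Concretely, I would argue as follows. Assume $\vdash_{\mathsf{G4ip}}\Gamma\Imp A$. By the already-established form of the embedding theorem (the statement proved via the $\imp$-rule analysis, culminating in the $(\imp\mathrm{L}^4)$ case), we get $\vdash_{\mathsf{L_{RBL}'}}(\Gamma\Imp A)^\#$. Now apply Lemma~\ref{lemma:rblequiv}, taking the $\mc{L}_{\mathrm{RBL}}$-sequent in that lemma to be $(\Gamma\Imp A)^\#$: since $\mathsf{L_{RBL}'}$ proves it, so does $\mathsf{L_{RBL}}$. This yields $\vdash_{\mathsf{L_{RBL}}}(\Gamma\Imp A)^\#$, which is exactly the claimed conclusion.

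There is essentially no obstacle here; the statement is a bookkeeping combination of two prior lemmas, and the entire content (the induction on $\mathsf{G4ip}$-derivations, the weight argument guaranteeing $i<j$ at each rule so that Corollary~\ref{corollary:cor1} applies, and the algebraic identities of Proposition~\ref{prop:1}) has already been discharged in proving the $\mathsf{L_{RBL}'}$-version. The single point worth flagging is purely notational: one must confirm that $(\Gamma\Imp A)^\#$ is indeed a legitimate $\mc{L}_{\mathrm{RBL}}$-sequent so that Lemma~\ref{lemma:rblequiv} is applicable to it. This is clear from the definition of the translation $(.)^\#$, which maps $\mc{L}_{\mathrm{Int}}$-sequents to $\mc{L}_{\mathrm{RBL}}$-sequents (replacing commas by $\owedge$ and inserting $\top^n\imp$ prefixes), so no further verification is needed. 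The proof therefore reduces to a one-line citation of the two preceding results.
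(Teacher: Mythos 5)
Your proposal is correct and is exactly the paper's own argument: the paper derives this theorem in one line by combining Lemma~\ref{lemma:rblequiv} (equivalence of $\mathsf{L_{RBL}}$ and $\mathsf{L_{RBL}'}$) with the previously proved $\mathsf{L_{RBL}'}$-version of the embedding theorem. Nothing further is needed.
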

For any $\mc{L}_{\mathrm{Int}}$-formula $A$ and an occurrence of its subformula $B$, define $A\{B/\top^n\imp B\}$ as the formula obtained from $A$ by replacing this occurrence of $B$ by $\top^n\imp B$.
\begin{lemma}\label{lemma:int0}
For any $\mc{L}_{\mathrm{Int}}$-formula $A$ and an occurrence of its subformula $B$, $\vdash_{\mathsf{G4ip}} A\{B/\top^n\imp B\}\Leftrightarrow A$
\end{lemma}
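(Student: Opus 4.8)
The plan is to factor the claim through two facts. First, that $\top^{n}\imp B$ is $\mathsf{G4ip}$-equivalent to $B$ for every $n>0$; second, that $\mathsf{G4ip}$ is closed under \emph{replacement of equivalents}: if $\vdash_{\mathsf{G4ip}}B\Leftrightarrow B'$, then substituting $B'$ for a single occurrence of $B$ inside any formula $A$ yields a $\mathsf{G4ip}$-equivalent formula. Granting both, the lemma is immediate by taking $B'=\top^{n}\imp B$. Throughout I would rely on the fact that $\mathsf{G4ip}$ derives exactly the intuitionistically valid sequents and hence admits cut, weakening, and the generalized identity $A\Imp A$ (\cite{Dyc92,TS00}); these give me modus ponens and transitive chaining, which the cut-free primitive rules do not supply directly.

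For the base fact I would induct on $n$, the crux being $n=1$. The direction $B\Imp\top\imp B$ follows from $(\imp\mathrm{R})$ applied to $\top,B\Imp B$, which holds by weakening and generalized identity. The converse $\top\imp B\Imp B$ follows by one application of admissible modus ponens against the theorem $\Imp\top$ (equivalently, a cut on $\top$). The inductive step adds a single outer implication, so that $\top^{n}\imp B\Leftrightarrow\top^{n-1}\imp B\Leftrightarrow\cdots\Leftrightarrow B$, the chaining being justified by cut-admissibility.

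For the replacement fact I would proceed by induction on the structure of $A$, tracking the designated occurrence of $B$. In the base case the occurrence is all of $A$ and the claim is the hypothesis $B\Leftrightarrow B'$. In the inductive cases $A=A_{1}\wedge A_{2}$, $A=A_{1}\vee A_{2}$, or $A=A_{1}\imp A_{2}$, the occurrence lies in one immediate subformula, say $A_{1}$; by the induction hypothesis $A_{1}\{B/B'\}\Leftrightarrow A_{1}$, and it remains to push this equivalence through the connective. This is exactly the statement that $\wedge,\vee,\imp$ are congruences for $\Leftrightarrow$ in intuitionistic logic; each direction is routine, and crucially the implication case works for the antecedent position as well, even though $\imp$ reverses polarity, because I am substituting along a two-sided equivalence rather than a one-sided entailment.

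The main obstacle is bookkeeping rather than depth: since the primitive rules of $\mathsf{G4ip}$ are cut-free and tailored to terminating proof search, one cannot literally chain equivalences or perform modus ponens inside the calculus. I would circumvent this once and for all by invoking the equivalence of $\mathsf{G4ip}$ with full intuitionistic propositional logic, after which the base equivalence and the congruence of the connectives are standard, and the induction on $A$ goes through without further complication.
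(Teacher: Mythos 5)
Your proposal is correct and is essentially the paper's own argument: the paper also proceeds by induction on the structure of $A$, using the base equivalence $\top^n\imp p\Leftrightarrow p$ (and $\top^n\imp A\Leftrightarrow A$ when the designated occurrence is all of $A$) and then pushing the equivalence through each connective via $(\imp\mathrm{L})$/$(\imp\mathrm{R})$ --- precisely your congruence step, including the two-sided handling of the antecedent position of $\imp$. Your only departures are organizational: you factor out \emph{replacement of equivalents} and the base equivalence as standalone lemmas, and you explicitly flag that chaining and modus ponens require the admissibility of cut, weakening and generalized identity in $\mathsf{G4ip}$, a point the paper uses tacitly.
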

\begin{proof}
By induction on the complexity of $A$.

Case 1. $A=p$ for some propositional letter $p$. It is easy to see that $\vdash_{\mathsf{G4ip}}\top^n\imp p\Leftrightarrow p$.

Case 2. $A=A_1\imp A_2$. If $B=A$ then obviously we have $\vdash_{\mathsf{G4ip}} \top^n\Imp A \Leftrightarrow A$. Otherwise $B$ occurs in $A_1$ or $A_2$. Assume that $B$ occurs in $A_1$. Then by induction hypothesis $\vdash_{\mathsf{G4ip}} A_1\{B/\top^n\imp B\} \Leftrightarrow A_1$. Hence by ($\imp L$) and ($\imp R$), we get $\vdash_{\mathsf{G4ip}} A_1\{B/\top^n\imp B\}\imp A_2 \Leftrightarrow A_1\imp A_2$. The case that $B$ occurs in $A_2$ is similar.

Case 3. $A=A_1\wedge A_2$ or $A=A_1\vee A_2$. The proof is similar to case 2.
\end{proof}
Since formula $Tr(A)$ is obtained from formula $A$ by replacing some occurrences of subformula $B$ by $\top^n \imp B$ for some $n\geq 0$, by lemma \ref{lemma:int0}, we get the following corollary immediately.
\begin{corollary}\label{lemma:int}
For any $\mc{L}_{\mathrm{Int}}$-formula $A$, $\vdash_{\mathsf{G4ip}}Tr(A)\Leftrightarrow A$.
\end{corollary}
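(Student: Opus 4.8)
The plan is to read the corollary off Lemma \ref{lemma:int0} by presenting the translation $Tr$ as a finite composition of exactly the elementary substitutions that lemma governs. First I would unwind the definitions. By the clause defining $(\cdot)^\#$ on sequents, $(\Imp A)^\# = \top \Imp A^{\#w(\Imp A)}$, so its succedent is $A^{\#n}$ with $n = w(\Imp A)$, whence $Tr(A) = A^{\#n}$. Reading off the recursive clauses for $(\cdot)^{\#n}$, the formula $A^{\#n}$ is precisely the result of prefixing $\top^n\imp$ to each of the finitely many \emph{positive} subformula occurrences of $A$, leaving the negative occurrences untouched. If $n = 0$ nothing is changed and the claim is trivial, so I assume $n \geq 1$.

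The key step is to realise this simultaneous rewriting as a chain of single-occurrence rewrites. I would prefix $\top^n\imp$ to the chosen positive occurrences one at a time, obtaining a sequence of $\mc{L}_{\mathrm{Int}}$-formulae $A = C_0, C_1, \dots, C_k = A^{\#n}$, where $C_m$ is obtained from $C_{m-1}$ by replacing the subformula $B$ currently occupying one of the remaining marked positions by $\top^n\imp B$. Each such step is literally an instance of the substitution $C_{m-1}\{B/\top^n\imp B\}$ occurring in Lemma \ref{lemma:int0}, so applying the lemma to $C_{m-1}$ gives $\vdash_{\mathsf{G4ip}} C_m \Leftrightarrow C_{m-1}$ for every $m$, and prefixing all the marked occurrences reconstructs $A^{\#n}$ independently of the order.

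It then remains to close the chain. Since $\vdash_{\mathsf{G4ip}} X \Leftrightarrow Y$ abbreviates derivability of both $X \Imp Y$ and $Y \Imp X$, and $\mathsf{G4ip}$ admits Cut, provable equivalence is transitive; composing the $k$ equivalences $C_m \Leftrightarrow C_{m-1}$ yields $\vdash_{\mathsf{G4ip}} A^{\#n} \Leftrightarrow A$, that is $\vdash_{\mathsf{G4ip}} Tr(A) \Leftrightarrow A$. The only point that needs care, and the reason the corollary is indeed immediate, is the bookkeeping of this decomposition: one must confirm that prefixing the marked positive occurrences one at a time really does rebuild $A^{\#n}$ and that every intermediate step is a genuine substitution $B \mapsto \top^n\imp B$ on the current formula, so that the hypotheses of Lemma \ref{lemma:int0} are met verbatim at each stage. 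No congruence principle beyond iterated application of Lemma \ref{lemma:int0} is needed.
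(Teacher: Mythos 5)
Your proposal is correct and matches the paper's intent exactly: the paper derives this corollary from Lemma \ref{lemma:int0} in one sentence, observing that $Tr(A)$ arises from $A$ by replacing occurrences of subformulas $B$ by $\top^n\imp B$, and your argument simply spells out the bookkeeping that the paper leaves implicit (decomposing the simultaneous replacement into single-occurrence substitutions $C_{m-1}\{B/\top^n\imp B\}$ and chaining the resulting equivalences by transitivity). No divergence in approach, just a more explicit rendering of the same ``immediate'' step.
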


\begin{theorem}\label{thm:rblemb}
For any $\mc{L}_{\mathrm{Int}}$-formula $A$, $\vdash_{\mathsf{G4ip}}\Imp A$ iff $\vdash_{\mathsf{L_{RBL}}}\top\Imp Tr(A)$.
\end{theorem}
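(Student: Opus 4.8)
The plan is to prove the two directions separately, invoking the translation theorem in its $\mathsf{L_{RBL}}$ form (Theorem \ref{thm:g4ip}) for the left-to-right implication and the conservativity result (Theorem \ref{theorem:conservative1}) together with the faithfulness of $Tr$ (Corollary \ref{lemma:int}) for the right-to-left implication. Before starting, I would record the bookkeeping identity $(\Imp A)^\# = \top\Imp Tr(A)$: since the antecedent of $\Imp A$ is empty, the definition of $(\cdot)^\#$ gives $(\Imp A)^\# = \top\Imp A^{\#w(\Imp A)}$, and by definition $Tr(A)$ is exactly the succedent $A^{\#w(\Imp A)}$ of this sequent. With this identity the statement becomes a matter of chaining the cited results.

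For the forward direction, suppose $\vdash_{\mathsf{G4ip}}\Imp A$. Theorem \ref{thm:g4ip} immediately gives $\vdash_{\mathsf{L_{RBL}}}(\Imp A)^\#$, and by the identity above this is precisely $\vdash_{\mathsf{L_{RBL}}}\top\Imp Tr(A)$. For the backward direction, suppose $\vdash_{\mathsf{L_{RBL}}}\top\Imp Tr(A)$. The formula $Tr(A)$ lies in $\mc{L}_{\mathrm{BPL}}$ by the very construction of the translation, so Theorem \ref{theorem:conservative1} yields $\vdash_{\mathsf{BPL}} Tr(A)$. Since $\mathsf{BPL}$ is a sublogic of $\mathsf{Int}$, this transfers to $\vdash_{\mathsf{Int}} Tr(A)$, i.e. $\vdash_{\mathsf{G4ip}}\Imp Tr(A)$. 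Finally Corollary \ref{lemma:int} supplies $\vdash_{\mathsf{G4ip}}Tr(A)\Leftrightarrow A$, and combining this with $\vdash_{\mathsf{G4ip}}\Imp Tr(A)$ (via cut, which is admissible in $\mathsf{G4ip}$) gives $\vdash_{\mathsf{G4ip}}\Imp A$.

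Both directions are short once the heavy machinery is in place, so the genuine content is already carried by the earlier theorems; the points demanding care are all in the backward direction. I must check that $Tr(A)$ is genuinely an $\mc{L}_{\mathrm{BPL}}$-formula, which holds because $Tr$ introduces only $\top$ and $\imp$ on top of the intuitionistic connectives already present, so that the conservativity theorem applies; and I rely on the inclusion $\mathsf{BPL}\subseteq\mathsf{Int}$ to move BPL-provability into $\mathsf{Int}$. The only other subtlety is the final combination of $\vdash_{\mathsf{G4ip}}\Imp Tr(A)$ with the equivalence $Tr(A)\Leftrightarrow A$, which amounts to an application of cut in $\mathsf{G4ip}$; this is legitimate since $\mathsf{G4ip}$ is a cut-free calculus complete for intuitionistic logic and hence closed under cut. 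I expect the passage through $\mathsf{BPL}$ and the verification that $Tr(A)$ stays in $\mc{L}_{\mathrm{BPL}}$ to be the main thing to get right, rather than any involved calculation.
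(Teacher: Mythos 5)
Your proof is correct and follows essentially the same route as the paper: the forward direction by Theorem \ref{thm:g4ip}, and the backward direction by conservativity over $\mathsf{BPL}$, the inclusion $\mathsf{BPL}\subseteq\mathsf{Int}$, and Corollary \ref{lemma:int}. The only difference is that you make explicit three points the paper leaves implicit (the identity $(\Imp A)^\# = \top\Imp Tr(A)$, the check that $Tr(A)$ is an $\mc{L}_{\mathrm{BPL}}$-formula, and the use of cut admissibility in $\mathsf{G4ip}$), which is a welcome tightening rather than a departure.
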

\begin{proof}
The left-to-right direction follows from theorem \ref{thm:g4ip}. For the other direction, Assume $\vdash_{\mathsf{L_{RBL}}}\top\Imp Tr(A)$. Since $\mathsf{L_{RBL}}$ is a conservative extension of $\mathsf{BPL}$ (\cite{ML14}),
we obtain $\vdash_{\mathsf{BPL}} Tr(A)$. Since $\mathsf{BPL\sub Int}$, we get $\vdash_{\mathsf{Int}} Tr(A)$.
Then $\vdash_{\mathsf{G4ip}}\Imp Tr(A)$. By corollary \ref{lemma:int}, we get $\vdash_{\mathsf{G4ip}}\Imp A$.
\end{proof}

The following theorem follows immediately from theorem \ref{thm:rblemb} and \ref{theorem:conservative1}.

\begin{theorem}\label{thm:emb}
For any $\mc{L}_{\mathrm{Int}}$-formula $A$, $\vdash_{\mathsf{Int}} A$ iff $\vdash_{\mathsf{BPL}} Tr(A)$.
\end{theorem}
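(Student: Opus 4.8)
The plan is to obtain the biconditional by composing Theorem~\ref{thm:rblemb} with Theorem~\ref{theorem:conservative1}, using that $\mathsf{G4ip}$ axiomatises $\mathsf{Int}$. First I would record the standard fact that $\mathsf{G4ip}$ is sound and complete for intuitionistic propositional logic (\cite{Dyc92,TS00}), so that $\vdash_{\mathsf{Int}}A$ holds exactly when $\vdash_{\mathsf{G4ip}}\Imp A$. This transfers the whole problem into the sequent-calculus setting in which Theorem~\ref{thm:rblemb} is phrased.

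The central step is then immediate: Theorem~\ref{thm:rblemb} gives $\vdash_{\mathsf{G4ip}}\Imp A$ iff $\vdash_{\mathsf{L_{RBL}}}\top\Imp Tr(A)$. To close the chain with Theorem~\ref{theorem:conservative1}, which is stated only for $\mc{L}_{\mathrm{BPL}}$-formulae, I must first verify that $Tr(A)$ really is an $\mc{L}_{\mathrm{BPL}}$-formula. This is a routine inspection of the definition of $Tr$: since $Tr(A)$ is the succedent $A^{\#w(\Imp A)}$ of $(\Imp A)^{\#}$, and the operation $(.)^{\#n}$ merely replaces positive subformulae $B$ of $A$ by $\top^n\imp B$, the resulting formula is built solely from atoms, $\bot$, $\top$, $\wedge$, $\vee$ and $\imp$. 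Neither of the genuinely residuated connectives $\cdot$ and $\leftarrow$ occurs, so $Tr(A)$ lies in $\mc{L}_{\mathrm{BPL}}$, as required.

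With this established I would simply concatenate the equivalences: $\vdash_{\mathsf{Int}}A$ iff $\vdash_{\mathsf{G4ip}}\Imp A$ iff $\vdash_{\mathsf{L_{RBL}}}\top\Imp Tr(A)$ (Theorem~\ref{thm:rblemb}) iff $\vdash_{\mathsf{BPL}}Tr(A)$ (Theorem~\ref{theorem:conservative1}). I expect no genuine obstacle here, since the argument is a short composition of biconditionals that are already proved; the only point demanding any care is the language check above, without which Theorem~\ref{theorem:conservative1} could not legitimately be applied.
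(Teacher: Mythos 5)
Your proposal is correct and follows exactly the paper's route: the paper proves this theorem by composing Theorem~\ref{thm:rblemb} with Theorem~\ref{theorem:conservative1}, exactly as you do. Your two explicit side-checks (completeness of $\mathsf{G4ip}$ for $\mathsf{Int}$, and that $Tr(A)$ contains no occurrence of $\cdot$ or $\leftarrow$ and hence lies in $\mc{L}_{\mathrm{BPL}}$) are left implicit in the paper but are precisely the facts its one-line proof relies on.
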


It is well-known that $\mathsf{Int}$ is embedded into the modal logic $\mathsf{S4}=K\oplus \B p\imp p\oplus \B p\imp\B \B p$ by G\"{o}del's translation (\cite{GO33,MT48}) $\mathsf{G}$ which is defined recursively as follows: $\mathsf{G}(p) = \B p$; $\mathsf{G}(\bot)=\bot$; $\mathsf{G}(A\wedge B)=\mathsf{G}(A)\wedge \mathsf{G}(B)$; $\mathsf{G}(A\vee B)=\mathsf{G}(A)\vee \mathsf{G}(B)$; $\mathsf{G}(A\imp B)=\B(\mathsf{G}(A)\imp \mathsf{G}(B))$.
Zakharyaschev (\cite{Zak97}) proved that the modal logic $\mathsf{Grz}=\mathsf{K}\oplus \B(\B( p\imp \B p))\imp p$ is the greatest extension of $\mathsf{S4}$ which intuitionistic logic can be embedded into. Esakia proved that the modal logic $\mathsf{S4}$ is embeddable into the modal logic $\mathsf{wK4} = \mathsf{K}\oplus p\wedge\B p\imp\B\B p$ (\cite{Esa01, Esa04}) by the translation $\mathsf{Sp}$, the mapping of the set of modal formulae into itself, commuting with Boolean connectives and $\mathsf{Sp}(\D p) = p\vee\D p$ and $\mathsf{Sp}(\B p) = p\wedge\B p$.
Hence $\mathsf{Int}$ is embedded into $\mathsf{wK4}$ via the composition $\mathsf{Sp}\circ\mathsf{G}$.
\begin{center}
\includegraphics{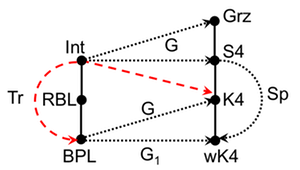}
\end{center}
Moreover it is known that Visser's basic propositional logic $\mathsf{BPL}$ is embedded into modal logic $\mathsf{K4}$ via G\"{o}del's translation $\mathsf{G}$ (\cite{visser81}). It is also known that $\mathsf{BPL}$ is embedded into $\mathsf{wK4}$ by the variant of $\mathsf{G}$ denoted by $\mathsf{G_1}$ which sends each propositional letter $p$ to $p\wedge \B p$ (\cite{SM13}).
By the theorem \ref{thm:emb}, we get the following new results: $\mathsf{Int}$ is embedded into $\mathsf{K4}$ by the map $\mathsf{G}\circ\mathsf{Tr}$; and $\mathsf{Int}$ is embedded into $\mathsf{wK4}$ by the map $\mathsf{G_1}\circ\mathsf{Tr}$.

By Ladner \cite{Lad77} results, we know that modal logic K4 is PSPACE complete. By Visser's translation it trivially follows that
 BPL is in PSPACE by the G$\ddot{o}$del translation. Note that our translation is a polynomial time tranlation. Consequently since intuitionistic logic is PSPACE complete \cite{Sta79} (intuitionistic logic logic is PSPACE complete), by theorem \ref{thm:emb}, we obtain that BPL is PSPACE-hard. Hence we get the following corollary.
\begin{corollary}
The logic BPL is PSPACE complete.
\end{corollary}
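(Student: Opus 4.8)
The plan is to prove the two halves of PSPACE-completeness separately: membership of $\mathsf{BPL}$ in PSPACE and PSPACE-hardness of $\mathsf{BPL}$. Each half is obtained by composing a polynomial-time translation with a known PSPACE result, so the whole argument is short once the translations are under control.

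For membership I would use the cited embedding of $\mathsf{BPL}$ into $\mathsf{K4}$ via G\"odel's translation $\mathsf{G}$: we have $\vdash_{\mathsf{BPL}} A$ iff $\vdash_{\mathsf{K4}} \mathsf{G}(A)$. Since $\mathsf{G}$ is computable in polynomial time and outputs a formula of size polynomial (indeed linear) in $|A|$, and since $\mathsf{K4}$ is decidable in PSPACE by Ladner \cite{Lad77}, provability in $\mathsf{BPL}$ reduces in polynomial time to a problem in PSPACE. As PSPACE is closed under polynomial-time many-one reductions, this yields $\mathsf{BPL}\in\mathsf{PSPACE}$.

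For hardness I would reduce from $\mathsf{Int}$, which is PSPACE-complete by Statman \cite{Sta79}, hence PSPACE-hard. Theorem \ref{thm:emb} gives, for every $\mc{L}_{\mathrm{Int}}$-formula $A$, the equivalence $\vdash_{\mathsf{Int}} A$ iff $\vdash_{\mathsf{BPL}} Tr(A)$. Thus the map $Tr$ is a many-one reduction from the provability problem of $\mathsf{Int}$ to that of $\mathsf{BPL}$. To upgrade this to a witness of PSPACE-hardness it remains only to check that $Tr$ is computable in polynomial time and produces output of polynomial size; granting that, the standard transitivity of polynomial reductions (a PSPACE-hard language reducing to $\mathsf{BPL}$) makes $\mathsf{BPL}$ PSPACE-hard. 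Combining the two halves gives PSPACE-completeness.

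The main obstacle is precisely the polynomiality of $Tr$. Recall that $Tr(A)$ is the succedent of $(\Imp A)^{\#w(\Imp A)}$, so it prefixes a block $\top^{n}\imp(\cdot)$ of length $n=w(\Imp A)$ in front of every positive subformula. One must therefore control the weight function $w$, and here caution is needed: the clauses $w(A\wedge B)=w(A)(1+w(B))$ and $w(A\imp B)=1+w(A)w(B)$ are multiplicative, so a naive bound on $w(\Imp A)$ is exponential in $|A|$, which would make the literal string $\top^{n}$ exponentially long. I would therefore verify carefully that the intended reduction keeps $|Tr(A)|$ polynomial in $|A|$ -- for instance by exhibiting a succinct (shared) representation of the repeated $\top^{n}$ prefixes, or by arguing that only a polynomial-size encoding of the translated formula is ever needed as input to a $\mathsf{BPL}$-prover -- and that this encoding is computable within a polynomial time bound. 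Once the stated fact that $Tr$ is a polynomial-time translation is secured in this sense, the reduction of the previous paragraph goes through and the corollary follows.
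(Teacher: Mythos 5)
Your proposal takes exactly the paper's route: membership of $\mathsf{BPL}$ in PSPACE via the G\"odel-style embedding of $\mathsf{BPL}$ into $\mathsf{K4}$ together with Ladner's bound \cite{Lad77}, and PSPACE-hardness via Theorem \ref{thm:emb} together with Statman's PSPACE-completeness of $\mathsf{Int}$ \cite{Sta79}.

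The obstacle you flag, however, is a genuine gap---and it is a gap in the paper's own proof as well, which merely asserts that ``our translation is a polynomial time translation.'' Your worry is well founded: the Troelstra--Schwichtenberg weight is multiplicative, so for right-nested implications $A_k = p\imp(p\imp(\cdots\imp p))$ one gets $w(A_k)=1+2\,w(A_{k-1})$, hence $w(A_k)\geq 2^{k}$ while $|A_k|=O(k)$ (nested conjunctions behave the same way). Since $Tr(A)$ places a literal prefix $\top^{n}\imp(\cdot)$ with $n=w(\Imp A)$ on every positive subformula occurrence, $|Tr(A)|$ is exponential in $|A|$, so $Tr$ as defined is not a polynomial reduction. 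Your two suggested escapes do not close this gap: a shared or succinct representation of the $\top^{n}$ blocks is not an $\mc{L}_{\mathrm{BPL}}$-formula, and hardness must be proved for $\mathsf{BPL}$-provability under its standard encoding, since that is the encoding for which the membership half holds; switching encodings changes the problem. The viable repair lies elsewhere: inspecting the proofs of Theorem \ref{thm:g4ip} and Corollary \ref{corollary:cor1}, nothing about $w$ is used except that it strictly decreases from the conclusion to the premises of each $\mathsf{G4ip}$ rule. Consequently the superscript $w(\Imp A)$ may be replaced by any upper bound on the depth of some $\mathsf{G4ip}$ derivation of $\Imp A$, and hardness follows once one proves that $\mathsf{G4ip}$-provable sequents admit derivations of depth polynomial in their size, defining $Tr$ with that fixed polynomial instead of $w$. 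That depth bound is a substantive claim---it is precisely what the exponential weight fails to deliver---and neither your proposal nor the paper establishes it.
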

This complexity result was first proved by Bou in \cite{bou04} via a polynominal time reduction from $\mathsf{QBF}$ to $\mathsf{BPL}$.
However, our proof of PSPACE completeness differs from it.

\bibliographystyle{plain}
\bibliography{rbl2}

\paragraph{Acknowledgements.}
 The first author was supported by the project of China National Social Sciences Fund (Grant no. 12CZX054).

\end{document}